\def\rr{{\mathbb R}}
\def\rn{{{\rr}^n}}
\def\zz{{\mathbb Z}}
\def\cc{{\mathbb C}}
\def\nn{{\mathbb N}}
\def\ca{{\mathcal A}}
\def\cf{{\mathcal F}}
\def\cl{{\mathcal L}}
\def\cp{{\mathcal P}}
\def\cs{{\mathcal S}}
\def\mi{{\mathrm I}}
\def\fz{\infty}
\def\az{\alpha}
\def\bz{\beta}
\def\dz{\delta}
\def\ez{\epsilon}
\def\gz{{\gamma}}
\def\bgz{{\Gamma}}
\def\lz{\lambda}
\def\blz{\Lambda}
\def\tz{\theta}
\def\sz{\sigma}
\def\vz{\varphi}
\def\vz{\varphi}
\def\lf{\left}
\def\r{\right}
\def\lfz{{\lfloor}}
\def\rfz{{\rfloor}}
\def\hs{\hspace{0.25cm}}
\def\ls{\lesssim}
\def\noz{\nonumber}
\def\wz{\widetilde}
\def\wh{\widehat}
\def\st{\subset}
\def\bh{\backslash}
\def\dxt{\,\frac{dx\,dt}{t}}
\def\dxtn{\,\frac{dx\,dt}{t^{n+1}}}
\def\dtn{\,\frac{dt}{t^{n+1}}}
\def\dis{\displaystyle}
\def\supp{\mathop\mathrm{\,supp\,}}
\def\loc{{\mathop\mathrm{\,loc\,}}}
\def\essinf{\mathop\mathrm{\,essinf\,}}
\def\esup{\mathop\mathrm{\,esssup\,}}
\def\fin{{\mathop\mathrm{fin}}}
\def\aa{{\mathbb A}}
\def\lv{{L^{\vz}(\rn)}}
\def\hv{{H^{\vz}(\rn)}}
\def\hp{{H^{p}(\rn)}}
\def\vb{{\|\chi_B\|_\lv^{-1}}}
\def\vbl{{\|\chi_{B_0}\|_\lv^{-1}}}
\def\hvf{{H^{\vz,q,s}_{\rm fin}(\rn)}}
\def\hva{{H^{\vz,q,s}_{\rm at}(\rn)}}
\def\lvb{{L_{\vz,s}^q(B)}}
\def\lvq{{L_{\vz}^q(B)}}
\def\bmo{{\mathop\mathrm {BMO}}}
\def\bmov{{{\mathop\mathrm {BMO}}^\vz(\rn)}}
\def\lqs{{\cl_{\vz,q,s}(\rn)}}
\def\lqps{{\cl_{\vz,q',s}(\rn)}}
\def\lys{{\cl_{\vz,1,s}(\rn)}}
\def\lym{{\cl_{\vz,1,m(\vz)}(\rn)}}
\def\ps{{P_{B}^s}}
\def\pse{{P_{2B}^s}}
\def\psl{{P_{B_0}^s}}
\def\psel{{P_{2B_0}^s}}
\newtheorem{thm}{Theorem}[section]
\newtheorem{lem}[thm]{Lemma}
\newtheorem{cor}[thm]{Corollary}
\theoremstyle{definition}
\newtheorem{defn}[thm]{Definition}
\newtheorem{rem}[thm]{Remark}
\numberwithin{equation}{section}
\begin{document}

\arraycolsep=1pt

\title{\bf\Large Musielak-Orlicz Campanato Spaces and Applications
\footnotetext{\hspace{-0.35cm} 2010 {\it
Mathematics Subject Classification}. Primary 42B25; Secondary 42B30, 42B35, 46E30.
\endgraf {\it Key words and phrases}. Musielak-Orlicz function, BMO space,
Campanato space, John-Nirenberg inequality, dual space, Carleson measure.
\endgraf
Dachun Yang is supported by the National
Natural Science Foundation  of China (Grant No. 11171027) and
the Specialized Research Fund for the Doctoral Program of Higher Education
of China (Grand No. 20120003110003).} }
\author{Yiyu Liang and Dachun Yang\,\footnote{Corresponding author}}
\date{}

\maketitle

%%% ----------------------------------------------------------------------

\vspace{-0.6cm}

\begin{center}
\begin{minipage}{13cm}
{\small {\bf Abstract}\quad
Let $\varphi: \mathbb R^n\times [0,\infty)\to[0,\infty)$
be such that $\varphi(x,\cdot)$ is an Orlicz function
and $\varphi(\cdot,t)$ is
a Muckenhoupt $A_\infty(\mathbb R^n)$ weight uniformly in $t$.
In this article, the authors introduce
the Musielak-Orlicz Campanato space ${\mathcal L}_{\varphi,q,s}({\mathbb R}^n)$
and, as an application, prove that some of them is the dual space of the Musielak-Orlicz
Hardy space $H^{\varphi}(\mathbb R^n)$, which in the case when $q=1$ and $s=0$
was obtained by L. D. Ky [arXiv: 1105.0486].
The authors also establish a John-Nirenberg inequality for functions
in ${\mathcal L}_{\varphi,1,s}({\mathbb R}^n)$ and, as an application,
the authors also obtain several equivalent characterizations
of ${\mathcal L}_{\varphi,q,s}({\mathbb R}^n)$, which, in return, further induce
the $\varphi$-Carleson measure characterization of
${\mathcal L}_{\varphi,1,s}({\mathbb R}^n)$.
}
\end{minipage}
\end{center}

%%% ----------------------------------------------------------------------

\vspace{0.4543pt}

\section{Introduction\label{s1}}

\hskip\parindent The \emph{$\mathrm{BMO}$ space} $\bmo(\rn)$,
originally introduced by John and Nirenberg \cite{jn},
is defined as the space of all locally integrable functions $f$ satisfying
$$\|f\|_{\mathrm{BMO}(\mathbb R^n)}
:= \sup_{B\subset\rn}\frac{1}{|B|}\int_{B}|f(x)-f_B|\,dx<\infty,$$
where the supremum is taken over all balls $B\subset \mathbb R^n$ and
$f_B:=\frac{1}{|B|}\int_Bf(x)\,dx$.
Fefferman and Stein \cite{fs82} proved that
$\bmo$ is the dual space of the Hardy space $H^1(\rn)$. The space
$\mathrm{BMO}(\mathbb R^n)$ is also considered as a natural substitute for
$L^\infty(\mathbb{R}^n)$ when studying the boundedness of operators.

For any $s\in\zz_+:=\{0,1,\dots\}$,
let $\cp_s(\rn)$ denote the \emph{polynomials with order not more than $s$}.
Assume that $f$ is a locally integrable function on $\rn$.
For any ball $B\st\rn$ and $s\in\zz_+$,
let $P_B^sg$ be the \emph{unique polynomial} $P\in\cp_s(\rn)$
such that, for all $Q\in\cp_s(\rn)$,
$$\int_B[g(x)-P(x)]Q(x)\,dx=0.$$
Recall that, for $\bz\in[0,\fz)$, $s\in\zz_+$ and $q\in[0,\fz)$,
a locally integrable function $f$ is said to belong to
the \emph{Campanato spaces $L_{\bz,q,s}(\rn)$}
introduced by Campanato \cite{c64}, if
\begin{eqnarray}\label{cam}
\|f\|_{L_{\bz,q,s}(\mathbb R^n)}
:= \sup_{B\subset\rn}|B|^{-\bz}
\lf\{\frac{1}{|B|}\int_{B}|f(x)-P_B^sf(x)|^q\,dx\r\}^{1/q}<\infty,
\end{eqnarray}
where the supremum is taken over all balls $B$ in $\mathbb{R}^n$.

Obviously, $L_{0,1,0}(\rn)=\bmo(\rn)$ since $P_B^0f=f_B$.
Moreover, Taibelson and Weiss \cite{tw80} further showed that,
for all $q\in[1,\fz)$ and $s\in\zz_+$,
$L_{0,q,s}(\rn)$ and $\bmo(\rn)$ coincide with equivalent norms.
Taibelson and Weiss \cite{tw80} also proved that
the dual space of the Hardy space $\hp$ with $p\in(0,1]$ is
the space $L_{\frac1p-1,q,s}(\rn)$ for $q\in[1,\fz)$ and
$s\ge\lfz n(\frac1p-1)\rfz$. Here and in what follows, we use
the \emph{symbol $\lfz s\rfz$} for any $s\in\rr$ to denote
the maximal integer not more than $s$. For more applications of
Campanato spaces and those function spaces related
to Campanato spaces in harmonic analysis and partial differential equations,
see, for example, \cite{p69, tw80, ae02, g02, dxy07, n07,
hmy07, n10, ysy, ft11} and their references.

On the other hand, as a generalization of $L^p(\rn)$,
the Orlicz space was introduced by
Birnbaum-Orlicz \cite{bo31} and Orlicz \cite{o32}.
Recently, Ky \cite{ky} introduced a new \emph{Musielak-Orlicz Hardy space $\hv$},
which generalizes both the Orlicz-Hardy space (see, for example, \cite{j80,v87})
and the weighted Hardy space (see,
for example, \cite{g79, gr85, st89}).
Musielak-Orlicz functions are the natural generalization of Orlicz functions
that may vary in the spatial variables; see, for example, \cite{m83}.
The motivation to study function spaces of Musielak-Orlicz type comes
from applications to elasticity, fluid dynamics, image processing,
nonlinear partial differential equations and the calculus of variation;
see, for example, \cite{bg10,bgk12,bijz07,d05,dhr09,ky} and their references.
It is also worth noticing that
some special Musielak-Orlicz Hardy spaces appear naturally
in the study of the products of functions in $\bmo(\rn)$ and
$H^1(\rn)$ (see \cite{bgk12,bijz07}), and the endpoint estimates for
the div-curl lemma and the commutators of singular integral operators
(see \cite{bfg10,bgk12,ky2}).

Ky \cite{ky} also introduced the Musielak-Orlicz BMO-type space $\bmov$,
which generalizes the classical space $\mathrm{BMO}(\mathbb{R}^n)$,
the weighted BMO
space $\bmo_w(\rn)$ (see, for example, \cite{mw76}) and the Orlicz BMO-type space
$\bmo_\rho(\rn)$ (see, for example, \cite{s79, j80, v87}).
Ky \cite{ky} proved that the dual space of $\hv$
is the Musielak-Orlicz BMO space $\bmov$ \emph{in the case when $m(\vz)=0$},
where $m(\vz):=\lfz n(\frac{q(\vz)}{i(\vz)}-1)\rfz$,
$i(\vz)$ and $q(\vz)$ are the critical uniformly lower type index
and the critical weight index of $\vz$, respectively;
see \eqref{ivz} and \eqref{qvz} below.
Recall that a locally integrable function $f$ on $\rn$ is said to belong to
the \emph{space} $\mathrm{BMO}^{\varphi}(\mathbb{R}^n)$, if
$$\|f\|_{\mathrm{BMO}^{\varphi}(\mathbb{R}^n)}:=
\sup_{B\subset\rn}\frac{1}{\|\chi_B\|_{L^{\varphi}
(\mathbb{R}^n)}}\int_{B}|f(x)-f_{B}|\,dx<\infty,$$
where the supremum is taken over all balls $B$ in $\mathbb{R}^n$,
$\chi_B$ denotes the \emph{characteristic function} of $B$, and
$$\|\chi_B\|_{L^{\varphi}(\mathbb{R}^n)}:=
\inf\left\{\lambda\in(0,\infty):\
\int_{B}\varphi
\left(x, \frac {1}{\lambda}\right)
\,dx\leq1\right\}.$$
As an application, Ky \cite{ky} proved that the class of pointwise multipliers
for $\mathrm{BMO}(\mathbb{R}^n)$ characterized by Nakai and Yabuta \cite{ny85} is just
the space $L^{\infty}(\mathbb{R}^n)\cap
\mathrm{BMO}^{\mathrm{log}}(\mathbb{R}^n)$ (see \cite{ky}),
where $\mathrm{BMO}^{\mathrm{log}}(\mathbb{R}^n)$ denotes the \emph{Musielak-Orlicz BMO-type space}
related to the growth function
$$\varphi(x,t):=\frac{t}{\ln(e+|x|)+\ln(e+t)}$$
for all $x\in\mathbb{R}^n$ and $t\in[0,\infty)$.

To complete the study of Ky \cite{ky} on the dual space of
$\hv$, namely, to decide the dual space of Hardy space $\hv$
\emph{in the case when $m(\vz)\in\nn$}, we need to introduce
the following Musielak-Orlicz Campanato spaces.

\begin{defn}\label{d-cam}
Let $\vz$ be as in Definition \ref{d-vz}, $q\in[1,\fz)$ and $s\in\zz_+$.
A locally integrable function $f$ on $\rn$
is said to belong to the
\emph{Musielak-Orlicz Campanato space $\cl_{\vz,q,s}(\rn)$}, if
\begin{eqnarray*}
\|f\|_{\lqs}&:=&\sup_{B\subset\rn}\!\frac{1}{\|\chi_{B}
\|_{L^{\vz}(\rn)}}
\lf\{\!\!\int_{B}\!\!
\lf[\frac{\lf|f(x)-P_B^sf(x)\r|}{\vz(x,\vb)}\r]^q
\!\!\vz\!\lf(x,\vb\r)\!dx\r\}^{1/q}
\!\!\!<\fz,
\end{eqnarray*}
where the supremum is taken over all the balls
$B\subset\rn$.
\end{defn}

As usual, by abuse of notation, we identify $f\in\lqs$ with
$f+\cp_s(\rn)$.

\begin{rem}\label{r-def}
(i) When $\vz(x,t):=t^p$, with $p\in(0,1]$,
for all $x\in\rn$ and $t\in(0,\fz)$,
by some computations, we know that $\|\chi_B\|_\lv=|B|^{1/p}$ and $\vz(x,\vb)=|B|^{-1}$
for any ball $B\st\rn$ and $x\in\rn$. Thus, in this case,
$\lqs$ is just the classical Campanato space
$L_{\frac1p-1,q,s}(\rn)$ (see \eqref{cam}),
which was introduced by Campanato \cite{c64}.

(ii) When $\vz(x,t):=w(x)t^p$, with $p\in(0,1]$
and $w\in A_{\fz}(\rn)$, for all $x\in\rn$ and $t\in(0,\fz)$,
via some computations, we see that
$$\|\chi_B\|_\lv=[w(B)]^{1/p}\quad \mathrm{and}\quad \vz\lf(x,\vb\r)=[w(B)]^{-1}$$
for any ball $B\st\rn$ and $x\in\rn$, where
$w(B):=\int_{B}w(x)\,dx$. Thus, in this case, the space $\lqs$
coincides with the weighted Campanato space
introduced by Garc\'ia-Cuerva \cite{g79}
as the dual space of the corresponding weighted Hardy spaces.
\end{rem}

This article is organized as follows.

In Section \ref{s2}, we recall some
notions concerning growth functions and some
of their properties established in \cite{ky}.
Then via some skillful applications of these properties on growth functions
and some estimate of the minimal polynomial from Taibleson and Weiss \cite{tw80},
we establish a John-Nirenberg inequality for functions in $\lys$; see Theorem \ref{J-N} below.
To obtain this, we need to overcome some essential difficulties caused by the inseparability of
the space variant $x$ and the time variant $t$ appeared in $\vz(x,t)$.
A new idea for this is to choose $t=\vb$, which brings us some convenient estimates such as,
for all balls $B$, $\vz(B,\vb)=1$ and there exists a positive constant $C$ such that, for all balls $\wz B\st B$,  
$$\frac{\vz(B,\vb)}{\|\chi_B\|_\lv}
\le C\frac{\vz(\wz B,\vb)}{\|\chi_{\wz B}\|_\lv}.$$
As an application of the John-Nirenberg inequality,
in Theorem \ref{char} below,
we further prove that $\lys=\lqs$ with $q\in[1,q(\vz)')$
and some other equivalent characterizations for $\lqs$, where here and
in what follows, $r'$ denotes the \emph{conjugate index} of $r\in[1,\fz]$.
Even when $\vz$ is as in Remark \ref{r-def}(ii) with $p\in (0,1)$,
Theorems \ref{J-N} and \ref{char} are also new; see Remarks \ref{r-JN} and
\ref{r-char} below.

In Section \ref{s3}, applying the equivalent characterizations of
$\lqs$ in Section \ref{s2}, we prove that the dual space of $\hv$
is the space $\lys$ for all $s\in [m(\vz),\fz)\cap\zz_+$ and $m(\vz)\in\zz_+$,
which further completes the dual result of Ky \cite{ky} in the case $m(\vz)=0$;
see Theorem \ref{dual} below. As a corollary, we further conclude
that $\lqs$ and $\cl_{\vz, 1, m(\vz)}(\rn)$ coincide with equivalent norms
for all $q\in[1,q(\vz)')$ and $s\in [m(\vz),\fz)\cap\zz_+$;
see Corollary \ref{c3.1} below.

Section \ref{s4} is devoted to establish a $\vz$-Carleson measure
characterization of $\lys$; see Theorem \ref{carl} below.
To this end, we need to use the Lusin area function characterization of $\hv$ established in \cite{hyy} and
the equivalent characterizations obtained in Theorem \ref{char}.
Even when $\vz$ is as in Remark \ref{r-def}(ii) with $p\in(0,1)$ and
$w\in A_1(\rn)$, Theorem \ref{carl} is also new;
see Remark \ref{r-carl} below.

Except to give out the dual space of $\hv$ in the case when $m(\vz)\in\nn$,
another interesting application of the Musielak-Orlicz Campanato spaces
$\lqs$ exists in establishing the intrinsic Littlewood-Paley function
characterizations of the Hardy space $\hv$, which will be given in \cite{ly}.
The dual space $(\lym)^*$ of the Musielak-Orlicz Campanato space
$\lym$, together with the fact that $\lym$ is the dual space
of $\hv$, will play a key role in \cite{ly}.

Finally we make some conventions on notation. Throughout the whole
paper, we denote by $C$ a \emph{positive constant} which is
independent of the main parameters, but it may vary from line to
line. The {\it symbol} $A\ls B$ means that $A\le CB$. If $A\ls
B$ and $B\ls A$, then we write $A\sim B$.
For any measurable subset $E$ of $\rn$, we denote by $E^\complement$ the {\it set}
$\rn\setminus E$ and its \emph{characteristic function} by $\chi_{E}$.
We also set $\nn:=\{1,\,2,\,
\ldots\}$ and $\zz_+:=\nn\cup\{0\}$.

\section{The John-Nirenberg Inequality and Equivalent Characterizations \label{s2}}

\hskip\parindent In this section, we prove a John-Nirenberg inequality
for functions in $\lys$,
by which we further establish some equivalent characterizations for $\lqs$.

Recall that a function
$\Phi:[0,\fz)\to[0,\fz)$ is called an \emph{Orlicz function} if it
is nondecreasing, $\Phi(0)=0$, $\Phi(t)>0$ for all $t\in(0,\fz)$ and
$\lim_{t\to\fz}\Phi(t)=\fz$.
The function $\Phi$ is said to be of
\emph{upper type $p$} (resp. \emph{lower type $p$}) for some $p\in[0,\fz)$, if
there exists a positive constant $C$ such that, for all
$t\in[1,\fz)$ (resp. $t\in[0,1]$) and $s\in[0,\fz)$,
$\Phi(st)\le Ct^p \Phi(s).$

For a given function $\vz:\,\rn\times[0,\fz)\to[0,\fz)$ such that, for
any $x\in\rn$, $\vz(x,\cdot)$ is an Orlicz function,
$\vz$ is said to be of \emph{uniformly upper type $p$} (resp.
\emph{uniformly lower type $p$}) for some $p\in[0,\fz)$ if there
exists a positive constant $C$ such that, for all $x\in\rn$,
$t\in[0,\fz)$ and $s\in[1,\fz)$ (resp. $s\in[0,1]$), $\vz(x,st)\le Cs^p\vz(x,t)$.
We say that $\vz$ is of \emph{positive uniformly upper type}
(resp. \emph{uniformly lower type}) if it is of uniformly upper
type (resp. uniformly lower type) $p$ for some $p\in(0,\fz)$.
The \emph{critical uniformly lower type index} of $\vz$ is defined by
\begin{equation}\label{ivz}
i(\vz):=\sup\{p\in(0,\fz):\ \vz\ \text{is of uniformly lower
type}\ p\}.
\end{equation}
Observe that $i(\vz)$ may not be attainable, namely, $\vz$ may
not be of uniformly lower type $i(\vz)$ (see \cite{lhy}).

Let $\vz:\rn\times[0,\fz)\to[0,\fz)$ satisfy that
$x\mapsto\vz(x,t)$ is measurable for all $t\in[0,\fz)$. Following
\cite{ky}, $\vz(\cdot,t)$ is said to be \emph{uniformly locally
integrable} if, for all compact sets $K$ in $\rn$,
$$\int_{K}\sup_{t\in(0,\fz)}\lf\{|\vz(x,t)|
\lf[\int_{K}|\vz(y,t)|\,dy\r]^{-1}\r\}\,dx<\fz.$$

The function $\vz(\cdot,t)$ is said to satisfy the
\emph{uniformly Muckenhoupt condition for some $q\in[1,\fz)$},
denoted by $\vz\in\aa_q(\rn)$, if
$\vz$ is uniformly locally integrable and, when $q\in (1,\fz)$,
\begin{equation*}
\sup_{t\in
(0,\fz)}\sup_{B\subset\rn}\frac{1}{|B|^q}\int_B
\vz(x,t)\,dx \lf\{\int_B
[\vz(y,t)]^{-q'/q}\,dy\r\}^{q/q'}<\fz,
\end{equation*}
where $1/q+1/q'=1$, or, when $q=1$,
\begin{equation*}
\sup_{t\in (0,\fz)}
\sup_{B\subset\rn}\frac{1}{|B|}\int_B \vz(x,t)\,dx
\lf(\esup_{y\in B}[\vz(y,t)]^{-1}\r)<\fz.
\end{equation*}
Here the first supremums are taken over all $t\in[0,\fz)$ and the
second ones over all balls $B\subset\rn$.

Let $\aa_{\fz}(\rn):=\cup_{q\in[1,\fz)}\aa_{q}(\rn)$.
The \emph{critical weight index} of $\vz\in\aa_{\fz}(\rn)$ is defined as follows:
\begin{equation}\label{qvz}
q(\vz):=\inf\lf\{q\in[1,\fz):\ \vz\in\aa_{q}(\rn)\r\}.
\end{equation}
Now we recall the notion of growth functions (see \cite{ky}).

\begin{defn}\label{d-vz}
A function $ \varphi:\mathbb R^n\times[0,\infty)\to[0,\infty)$
is called a \emph{growth function} if
the following conditions are satisfied:
\vspace{-0.25cm}
\begin{enumerate}
\item[(i)] $\vz$ is a \emph{Musielak-Orlicz function}, namely,
\vspace{-0.2cm}
\begin{enumerate}
    \item[(i)$_1$] the function $\vz(x,\cdot):\ [0,\fz)\to[0,\fz)$ is an
    Orlicz function for all $x\in\rn$;
    \vspace{-0.2cm}
    \item [(i)$_2$] the function $\vz(\cdot,t)$ is a measurable
    function for all $t\in[0,\fz)$.
\end{enumerate}
\vspace{-0.25cm} \item[(ii)] $\vz\in \aa_{\fz}(\rn)$.
\vspace{-0.25cm} \item[(iii)] $\vz$ is of positive
uniformly lower type $p$ for some $p\in(0,1]$ and of uniformly
upper type 1.
\end{enumerate}
\end{defn}

Throughout the whole paper, we \emph{always
assume that $\vz$ is a growth function} as in Definition
\ref{d-vz} and, for any measurable subset $E$ of $\rn$ and $t\in[0,\fz)$,
we \emph{denote $\int_E\vz(x,t)\,dx$ by $\vz(E,t)$}.
Let us now introduce the Musielak-Orlicz space.

The \emph{Musielak-Orlicz space $L^{\vz}(\rn)$} is defined to be the space
of all measurable functions $f$ such that
$\int_{\rn}\vz(x,|f(x)|)\,dx<\fz$ with the \emph{Luxembourg norm}
$$\|f\|_{L^{\vz}(\rn)}:=\inf\lf\{\lz\in(0,\fz):\ \int_{\rn}
\vz\lf(x,\frac{|f(x)|}{\lz}\r)\,dx\le1\r\}.$$

To establish a John-Nirengerg inequality for functions in $\lys$,
we need the following lemmas.
Observe that Lemmas \ref{C-Z} and \ref{weight}
are just \cite[Lemmas 3.2 and 3.1]{mw76}.

\begin{lem}\label{C-Z}
Let $w$ be a measure satisfying the doubling condition,
namely, there exists a positive constant $C_0$
such that, for all balls $B\st\rn$, $w(2B)\le C_0w(B)$
and, for a given ball $B\st\rn$ and $\sz$,
let $f$ be a nonnegative function which satisfies that
$$\frac1{w(B)}\int_Bf(x)w(x)\,dx\le \sz.$$
Then there exist non-overlapping balls $\{B_k\}_{k\in\nn}$
and a positive constant $\wz C$, depending only on $C_0$,
such that
$f(x)\le \sz$ for almost every $x\in B\bh\cup_{k\in\nn}B_k$ and
$$\sz\le\frac 1{w(B_k)}\int_{B_k}fw\,dx
\le \wz C\sz \mbox{ \ for all \ }k\in\nn. $$
\end{lem}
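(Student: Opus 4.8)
The statement to prove is Lemma \ref{C-Z} — a Calderón-Zygmund type decomposition adapted to a doubling measure $w$.

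The plan is to run a stopping-time (Calderón-Zygmund) argument on dyadic subcubes of the ball $B$, but with the Lebesgue measure replaced by the doubling measure $w$. First I would fix the given ball $B$ and consider the collection of dyadic cubes obtained by repeatedly bisecting $B$; more precisely, since $B$ is a ball, I would work with a fixed cube $Q_0$ comparable to $B$ (or directly with the dyadic mesh inside $B$, absorbing the geometric loss into the final constant). Starting from $Q_0$, I would select a cube $Q$ as a "stopping cube" if it is maximal (with respect to inclusion) among the dyadic descendants of $Q_0$ satisfying
$$\frac{1}{w(Q)}\int_Q f(x)\,w(x)\,dx>\sz.$$
The maximality is well defined because the hypothesis $\frac1{w(B)}\int_B fw\,dx\le\sz$ guarantees that $Q_0$ itself is not selected, so every stopping cube has a strictly larger dyadic ancestor. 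Denote the resulting (countable, pairwise non-overlapping) family by $\{Q_k\}_{k\in\nn}$, and let $\{B_k\}_{k\in\nn}$ be balls comparable to the $Q_k$'s.

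The two conclusions then follow by standard reasoning. For the lower bound, each selected cube $Q_k$ satisfies $\frac1{w(Q_k)}\int_{Q_k}fw\,dx>\sz$ by definition. For the upper bound, let $\wz Q_k$ be the dyadic parent of $Q_k$; by maximality $\wz Q_k$ was not selected, so $\int_{\wz Q_k}fw\,dx\le\sz\,w(\wz Q_k)$, whence
$$\frac1{w(Q_k)}\int_{Q_k}fw\,dx\le\frac{w(\wz Q_k)}{w(Q_k)}\cdot\frac1{w(\wz Q_k)}\int_{\wz Q_k}fw\,dx\le\frac{w(\wz Q_k)}{w(Q_k)}\,\sz.$$
Since $\wz Q_k\st cB_k$ for some dimensional constant $c$ and $w$ is doubling with constant $C_0$, the ratio $w(\wz Q_k)/w(Q_k)$ is bounded by a constant $\wz C$ depending only on $C_0$ (and $n$), which gives the claimed two-sided estimate. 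For the pointwise statement, if $x\in B\bh\cup_k B_k$ then $x$ lies in a nested sequence of dyadic cubes, none of which was selected, each with $w$-average of $f$ at most $\sz$; letting these cubes shrink to $x$ and invoking the Lebesgue differentiation theorem for the doubling measure $w$ yields $f(x)\le\sz$ for $w$-almost every such $x$ (equivalently, for Lebesgue-almost every such $x$, since $w\in A_\fz$ and the two measures are mutually absolutely continuous).

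The main technical point — and the only real obstacle — is the passage between balls and dyadic cubes: the statement is phrased in terms of balls, but the stopping-time argument wants a dyadic (or nested-bisection) structure. One must check that bisecting a ball and taking comparable balls at each stage keeps all the covering and doubling inequalities uniform, so that the final constant $\wz C$ depends only on $C_0$ (through its interplay with the fixed dilation factors, which are dimensional). This is exactly \cite[Lemma 3.2]{mw76}, so I would simply cite that reference for the details and present the argument above as the proof sketch, noting that nothing beyond the doubling property of $w$ and the Lebesgue differentiation theorem is used.
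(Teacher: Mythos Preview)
Your proposal is correct and matches the paper's approach: the paper does not prove this lemma at all but simply cites it as \cite[Lemma~3.2]{mw76}, which is exactly the reference you invoke at the end. Your additional sketch of the stopping-time argument is accurate and more than the paper itself provides.
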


\begin{lem}\label{weight}
Let $q\in(1,\fz)$ and $1/q+1/q'=1$.
If $w\in A_q(\rn)$, then there exists a positive constant $C$
such that, for all balls $B\st\rn$ and $\bz\in(0,\fz)$,
$$w(\{x\in B:\ w(x)<\bz\})\le C\lf[\bz\frac{|B|}{w(B)}\r]^{q'}w(B).$$
\end{lem}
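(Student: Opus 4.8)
\emph{Proof proposal for Lemma \ref{weight}.}

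The plan is to reduce the weighted size of the ``small'' set to its Lebesgue measure, and then to control that Lebesgue measure by testing the $A_q(\rn)$ condition of $w$ against its dual weight. Fix a ball $B\st\rn$ and $\bz\in(0,\fz)$, and put $E:=\{x\in B:\ w(x)<\bz\}$. Since $w(x)<\bz$ for every $x\in E$, we immediately obtain $w(E)=\int_Ew(x)\,dx\le\bz|E|$, so it suffices to show that $|E|\ls\bz^{q'-1}|B|^{q'}[w(B)]^{-1/(q-1)}$; the leftover factor $\bz$ then upgrades $\bz^{q'-1}$ to $\bz^{q'}$, and the powers of $|B|$ and $w(B)$ recombine into $[\bz|B|/w(B)]^{q'}w(B)$, since $q'-\frac1{q-1}=1$.

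First I would introduce the \emph{dual weight} $\sz:=w^{-1/(q-1)}$. On $E$ we have $w(x)<\bz$, hence $\sz(x)=[w(x)]^{-1/(q-1)}>\bz^{-1/(q-1)}$, which gives
$$|E|=\int_E\,dx\le\bz^{1/(q-1)}\int_E\sz(x)\,dx\le\bz^{1/(q-1)}\int_B\sz(x)\,dx.$$
Next I would invoke $w\in A_q(\rn)$, that is, the inequality
$$\frac1{|B|}\int_Bw(x)\,dx\,\lf(\frac1{|B|}\int_B\sz(x)\,dx\r)^{q-1}\le[w]_{A_q(\rn)},$$
where $[w]_{A_q(\rn)}$ denotes the $A_q(\rn)$ constant of $w$. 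Dividing by $\frac1{|B|}\int_Bw=w(B)/|B|$ and taking $(q-1)$-th roots, and using $1+\frac1{q-1}=q'$, this yields
$$\int_B\sz(x)\,dx\le[w]_{A_q(\rn)}^{1/(q-1)}\,|B|\,\lf(\frac{|B|}{w(B)}\r)^{1/(q-1)}=[w]_{A_q(\rn)}^{1/(q-1)}\,|B|^{q'}\,[w(B)]^{-1/(q-1)}.$$

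Combining the last two displays and using $\bz^{1+\frac1{q-1}}=\bz^{q'}$ together with $[w(B)]^{-1/(q-1)}=[w(B)]^{1-q'}=[w(B)]^{-q'}w(B)$,
$$w(E)\le\bz|E|\le[w]_{A_q(\rn)}^{1/(q-1)}\,\bz^{q'}\,|B|^{q'}\,[w(B)]^{-1/(q-1)}=[w]_{A_q(\rn)}^{1/(q-1)}\lf[\bz\,\frac{|B|}{w(B)}\r]^{q'}w(B),$$
which is exactly the claimed estimate with $C:=[w]_{A_q(\rn)}^{1/(q-1)}$, a constant depending only on $w$, $q$ and $n$. I do not anticipate any real obstacle here: the argument is the standard $A_q$/dual-weight manipulation, and the only point requiring care is the bookkeeping of the exponents $\frac1{q-1}$, $q'$ and their sum. (This estimate is classical and coincides with \cite[Lemma 3.1]{mw76}, so in the paper one may simply cite it.)
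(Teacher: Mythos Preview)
Your argument is correct; it is the standard dual-weight computation and the exponent bookkeeping is handled cleanly. Note that the paper itself does not supply a proof of this lemma but simply cites it as \cite[Lemma~3.1]{mw76}, so your write-up actually goes beyond what the paper does; your closing parenthetical remark already anticipates this.
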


The following Lemma \ref{PBg} is from \cite[p. 83]{tw80}.

\begin{lem}\label{PBg}
Let $g\in L_\loc^1(\rn)$, $s\in\zz_+$ and $B$ be a ball in $\rn$.
Then there exists a positive constants $C$,
independent of $g$ and $B$, such that
$$\sup_{x\in B}|P_B^sg(x)|\le\frac C{|B|}\int_B|g(x)|\,dx.$$
\end{lem}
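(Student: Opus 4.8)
The plan is to reduce the estimate to the unit ball by an affine change of variables and then to exploit the finite dimensionality of $\cp_s(\rn)$.

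First I would record the structural description of $P_B^sg$: by its defining relation $\int_B[g(x)-P_B^sg(x)]Q(x)\,dx=0$ for all $Q\in\cp_s(\rn)$, together with the asserted uniqueness, $P_B^sg$ is exactly the orthogonal projection of $g$ onto the finite-dimensional subspace $\cp_s(\rn)$ of $L^2(B,\frac{dx}{|B|})$ equipped with the inner product $\la h_1,h_2\ra_B:=\frac1{|B|}\int_B h_1(x)h_2(x)\,dx$; in particular $P_B^sg$ depends only on $g|_B$.

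Next, write $B=B(x_0,r)$ and set $\wz g(y):=g(x_0+ry)$ for $y\in B(0,1)$. Performing the substitution $x=x_0+ry$ in the defining orthogonality relation, and using that $Q\mapsto Q(x_0+r\,\cdot)$ is a bijection of $\cp_s(\rn)$ onto itself, one sees that the polynomial $y\mapsto P_B^sg(x_0+ry)$ lies in $\cp_s(\rn)$ and satisfies $\int_{B(0,1)}[\wz g(y)-P_B^sg(x_0+ry)]\wz Q(y)\,dy=0$ for all $\wz Q\in\cp_s(\rn)$; by the uniqueness of the minimal polynomial on $B(0,1)$ this forces $P_B^sg(x_0+ry)=P_{B(0,1)}^s\wz g(y)$ for all $y\in B(0,1)$. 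Since also $\frac1{|B|}\int_B|g(x)|\,dx=\frac1{|B(0,1)|}\int_{B(0,1)}|\wz g(y)|\,dy$ and $\sup_{x\in B}|P_B^sg(x)|=\sup_{y\in B(0,1)}|P_{B(0,1)}^s\wz g(y)|$, the desired inequality for $B$ is equivalent to the same inequality for the unit ball $B(0,1)$, with the same constant.

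Finally, on $B(0,1)$ I would fix an orthonormal basis $\{e_1,\dots,e_N\}$ of $\cp_s(\rn)$ with respect to $\la\cdot,\cdot\ra_{B(0,1)}$, where $N=N(n,s):=\dim\cp_s(\rn)$, so that $P_{B(0,1)}^s\wz g=\sum_{j=1}^N\la\wz g,e_j\ra_{B(0,1)}e_j$. Then, for all $y\in B(0,1)$,
\begin{eqnarray*}
\lf|P_{B(0,1)}^s\wz g(y)\r|
&\le&\sum_{j=1}^N\lf(\frac1{|B(0,1)|}\int_{B(0,1)}|\wz g(x)|\,|e_j(x)|\,dx\r)|e_j(y)|\\
&\le&N\lf(\max_{1\le j\le N}\sup_{z\in B(0,1)}|e_j(z)|\r)^2\frac1{|B(0,1)|}\int_{B(0,1)}|\wz g(x)|\,dx,
\end{eqnarray*}
and the constant $C:=N\lf(\max_{1\le j\le N}\sup_{z\in B(0,1)}|e_j(z)|\r)^2$ depends only on $n$ and $s$. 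Combined with the reduction step, this yields the lemma. The argument is essentially soft; the only point demanding a little care is verifying that the affine rescaling preserves the defining orthogonality relation, so that uniqueness intertwines $P_B^s$ with $P_{B(0,1)}^s$ — there is no genuine analytic obstacle here.
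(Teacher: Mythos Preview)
The paper does not actually prove this lemma; it simply cites \cite[p.~83]{tw80}. Your argument is correct and self-contained: the affine reduction to the unit ball is precisely how one obtains a constant independent of $B$, and the orthonormal-basis expansion on $B(0,1)$ then gives an explicit bound depending only on $n$ and $s$.

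One small point worth tightening: you describe $P_B^sg$ as the orthogonal projection in $L^2(B,\frac{dx}{|B|})$, but the hypothesis is only $g\in L_\loc^1(\rn)$, so $g$ need not lie in $L^2(B)$. This is harmless, because the basis polynomials $e_j$ are bounded on $B(0,1)$ and hence the coefficients $\la\wz g,e_j\ra_{B(0,1)}$ are well defined for $\wz g\in L^1(B(0,1))$; the polynomial $\sum_{j=1}^N\la\wz g,e_j\ra_{B(0,1)}e_j$ then satisfies the defining orthogonality relation, and by uniqueness it coincides with $P_{B(0,1)}^s\wz g$. With that remark the proof stands as written.
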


Now, we can state the John-Nirenberg inequality for functions
in $\lys$ as follows.

\begin{thm}\label{J-N}
Let $\vz$ be as in Definition \ref{d-vz} and $f\in\lys$.
Then there exist positive constants $C_1$, $C_2$ and $C_3$, independent of $f$,
such that, for all balls $B\subset\rn$ and $\az\in(0,\fz)$,
when $\vz\in\aa_1(\rn)$,
\begin{eqnarray*}
&&\vz\lf(\lf\{x\in B:\ \frac{|f(x)-P_B^sf(x)|}{\vz(x,\vb)}>\az\r\},
\vb \r)\\
&&\hs\le C_1\exp\lf\{-\frac{C_2\az}
{\|f\|_{\lys}\|\chi_B\|_\lv}\r\}
\end{eqnarray*}
and, when $\vz\in\aa_q(\rn)$ for some $q\in(1,\fz)$,
\begin{eqnarray*}
&&\vz\lf(\lf\{x\in B:\ \frac{|f(x)-P_B^sf(x)|}{\vz(x,\vb)}>\az\r\},
\vb \r)\\
&&\hs\le C_3\lf[1+\frac{\az}
{\|f\|_{\lys}\|\chi_B\|_\lv}\r]^{-q'},
\end{eqnarray*}
where $1/q+1/q'=1$.
\end{thm}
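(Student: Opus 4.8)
The plan is to prove the John-Nirenberg inequality by adapting the classical stopping-time (Calder\'on-Zygmund decomposition) argument of John and Nirenberg, but carried out with respect to the doubling measure $w(\cdot):=\vz(\cdot,\vb)$ with the crucial choice $t=\vb$ held \emph{fixed} throughout, so that the time variable never interacts with the spatial iteration. The first step is to observe that, with this fixed $t$, the function $x\mapsto\vz(x,\vb)$ lies in $A_q(\rn)$ (since $\vz\in\aa_q(\rn)$) and hence is a doubling weight with a doubling constant depending only on the structural constants of $\vz$; this lets us invoke Lemma \ref{C-Z} and, in the case $q\in(1,\fz)$, Lemma \ref{weight}. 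It is also convenient to record at the outset the normalization $\vz(B,\vb)=1$ for every ball $B$, together with the monotonicity-type estimate $\vz(\wz B,\vb)/\|\chi_{\wz B}\|_\lv\ls\vz(B,\vb)/\|\chi_B\|_\lv$ for $\wz B\st B$ mentioned in the introduction; these are what convert control of $L^1$ averages against $w$ on subballs back into control by $\|f\|_{\lys}$.

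Next I would set up the iteration. Fix a ball $B$ and, after normalizing so that $\|f\|_{\lys}\|\chi_B\|_\lv=1$, apply Lemma \ref{C-Z} to the nonnegative function $x\mapsto|f(x)-P_B^sf(x)|/\vz(x,\vb)$ on $B$ against the weight $w=\vz(\cdot,\vb)$, with threshold $\sz$ a fixed large constant (to be chosen depending only on the doubling constant $C_0$ and the constant in Lemma \ref{PBg}). This produces nonoverlapping balls $\{B_k\}_k\subset B$ on each of which the $w$-average of $|f-P_B^sf|/\vz(\cdot,\vb)$ is between $\sz$ and $\wz C\sz$, while off $\cup_k B_k$ the function is $\le\sz$ $w$-a.e. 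The key geometric step is to replace $P_B^sf$ by $P_{B_k}^sf$ on $B_k$: using Lemma \ref{PBg} applied to $g=f-P_B^sf$, the polynomial $P_{B_k}^s(f-P_B^sf)=P_{B_k}^sf-P_B^sf$ satisfies $\sup_{B_k}|P_{B_k}^sf-P_B^sf|\ls\frac1{|B_k|}\int_{B_k}|f-P_B^sf|\,dx$, and this $|B_k|$-average is comparable (via the $A_q$ comparison between Lebesgue and $w$ averages, or more directly via the monotonicity estimate together with $\vz(B_k,\vb)/\|\chi_{B_k}\|_\lv\le\vz(B_k,\cdot)$-type bounds) to $\sz$ up to a constant. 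Choosing $\sz$ large then gives the self-improving recursion: writing $D(\lz):=\sup_{B'}\vz(\{x\in B':|f(x)-P_{B'}^sf(x)|/\vz(x,\vb)>\lz\},\vb)$ (supremum over all balls $B'$, normalized so $\|f\|_{\lys}\|\chi_{B'}\|_\lv=1$), one obtains $D(\lz+c\sz)\le\theta\,D(\lz)$ with $\theta\in(0,1)$ coming from $\sum_k w(B_k)=\sum_k\vz(B_k,\vb)\le\frac1\sz\int_B|f-P_B^sf|/\vz(\cdot,\vb)\,w\,dx\le\frac1\sz\|f\|_{\lys}\|\chi_B\|_\lv=\frac1\sz$ in the $A_1$ case, and from Lemma \ref{weight} giving the sharper $\ell^{q'}$-type gain in the $A_q$ case; iterating the recursion yields the exponential decay $C_1\exp(-C_2\az)$ when $\vz\in\aa_1(\rn)$ and the power decay $C_3(1+\az)^{-q'}$ when $\vz\in\aa_q(\rn)$, after undoing the normalization.

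I expect the main obstacle to be the passage from $P_B^sf$ to $P_{B_k}^sf$ combined with the mismatch between Lebesgue averages (natural for Lemma \ref{PBg} and for the polynomial estimates) and $w$-weighted averages (natural for the stopping-time decomposition via Lemma \ref{C-Z}); one must keep careful track that all comparison constants depend only on the structural data of $\vz$ (its uniform type exponents and its $\aa_q$ constant) and not on $B$, $B_k$, or $f$. This is precisely where the ``skillful application'' of the growth-function properties and the ``inseparability of $x$ and $t$'' difficulty alluded to in the introduction enters: the fixed choice $t=\vb$ (and, on subballs, the relation $\vz(\wz B,\vb)\ls\vz(\wz B,\|\chi_{\wz B}\|_\lv^{-1})\cdot(\|\chi_{\wz B}\|_\lv/\|\chi_B\|_\lv)$-type inequalities, using uniform upper/lower type to move between the two different time arguments $\vb$ and $\|\chi_{\wz B}\|_\lv^{-1}$) is what makes every such constant uniform. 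A secondary technical point is verifying that the exceptional set can be iterated, i.e. that on each $B_k$ we may re-apply the same argument with $B_k$ in the role of $B$; this is legitimate because $f\in\lys$ controls \emph{every} ball simultaneously, and because $\vz(\cdot,\vb)$ restricted to $B_k$ is comparable to $\vz(\cdot,\|\chi_{B_k}\|_\lv^{-1})$ up to the uniform constant above, so the normalization persists down the tree. Once the recursion $D(\lz+c)\le\theta D(\lz)$ (resp.\ the $\ell^{q'}$ version) is in hand, the two displayed estimates follow by a routine summation, so no further difficulty is anticipated there.
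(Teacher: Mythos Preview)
Your proposal uses the same ingredients and overall strategy as the paper (weighted Calder\'on--Zygmund stopping via Lemma~\ref{C-Z}, the polynomial comparison via Lemma~\ref{PBg}, Lemma~\ref{weight} for the $\aa_q$ case, then a recursion on the distribution function), so the outline is correct. There is, however, one internal inconsistency in your plan that matters, and it is exactly the point where the paper's argument is delicate.

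In your first paragraph you correctly say that $t=\vb$ is to be ``held fixed throughout, so that the time variable never interacts with the spatial iteration''. But in your third paragraph you propose to ``re-apply the same argument with $B_k$ in the role of $B$'' and to pass between the two time arguments $\vb$ and $\|\chi_{B_k}\|_\lv^{-1}$ using upper and lower type. That second route fails: the comparison between $\vz(\cdot,\vb)$ and $\vz(\cdot,\|\chi_{B_k}\|_\lv^{-1})$ is \emph{not} uniform --- with lower type $p<1$ one only gets control up to a factor like $(\|\chi_B\|_\lv/\|\chi_{B_k}\|_\lv)^{1-p}$, which is unbounded as the stopping balls shrink, and these factors would accumulate through the iteration and kill the recursion. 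The paper does precisely what your first paragraph suggests and never switches: it fixes the outer ball $B_0$, keeps the time variable equal to $\|\chi_{B_0}\|_\lv^{-1}$ for \emph{every} subball, and sets
\[
\cf(\az):=\sup_{B\subset B_0}\frac{\lz(\az,B)}{\vz(B,\|\chi_{B_0}\|_\lv^{-1})}.
\]
The crucial uniform starting estimate (using \emph{only} upper type~$1$, not lower type) is that
\[
\frac{1}{\vz(B,\|\chi_{B_0}\|_\lv^{-1})}\int_B|f-P_B^sf|\,dx\le \wz C_0
\qquad\text{for every }B\subset B_0,
\]
so the Calder\'on--Zygmund step can be run on any $B\subset B_0$ with the \emph{same} weight $\vz(\cdot,\|\chi_{B_0}\|_\lv^{-1})$; since each stopping ball $B_k$ is again a subball of $B_0$, the quantity $\lz(\az-\gz,B_k)/\vz(B_k,\|\chi_{B_0}\|_\lv^{-1})$ is already dominated by $\cf(\az-\gz)$ with no time change needed, and the recursion $\cf(\az)\le\tfrac12\cf(\az-\gz)$ (resp.\ its $\aa_q$ analogue) follows. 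A related slip: the monotonicity estimate you quote is stated in the wrong direction; the paper's inequality (and the one actually equivalent to the uniform bound above) is
\[
\frac{\vz(B,\vb)}{\|\chi_B\|_\lv}\le C\,\frac{\vz(\wz B,\vb)}{\|\chi_{\wz B}\|_\lv}
\qquad\text{for }\wz B\subset B.
\]
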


\begin{proof}
Let $f\in\lys$. Fix any ball $B_0\subset\rn$.
Without loss of generality, we may
assume that
$\|f\|_{\lys}=\vbl$; otherwise, we replace $f$ by
$\frac{f}{\|f\|_{\lys}\|\chi_{B_0}\|_{L^\vz(\rn)}}$.
For any $\az\in(0,\fz)$ and ball $B\st B_0$,
let
$$\lz(\az,B)
:=\vz\lf(\lf\{x\in B:\ \frac{|f(x)-P_B^sf(x)|}{\vz(x,\vbl)}>\az\r\},\vbl \r)$$
and
\begin{eqnarray}\label{jn1}
\cf(\az):=\sup_{B\st B_0}\frac{\lz(\az,B)}{\vz(B,\vbl)}.
\end{eqnarray}
By $\lz(\az,B)\le\vz(B,\vbl)$, we see that, for all $\az\in(0,\fz)$,
$\cf(\az)\le1$.

From the upper type $1$ property of $\vz$,
$\|f\|_{\lys}=\vbl$ and 
$$\vz\lf(B,\vb\r)=1,$$
it follows that there exists a positive constant $\wz C_0$
such that, for any ball $B\st B_0$,
\begin{eqnarray}\label{jn0}
&&\frac1{\vz(B,\vbl)}\int_{B}|f(x)-P_{B}^sf(x)|\,dx\\
&&\hs\le\frac{\|\chi_{B}\|_\lv}{\vz(B,\vbl)\|\chi_{B_0}\|_\lv}\noz\\
&&\hs\le\frac{\wz C_0\|\chi_{B}\|_\lv}{\vz(B,\vb)\frac{\vbl}{\vb}
\|\chi_{B_0}\|_\lv}
=\wz C_0.\noz
\end{eqnarray}
Applying Lemma \ref{C-Z} to $B$,
$\vz(\cdot,\vbl)|f-P_{B}^sf|$ and $\sz\in[\wz C_0,\fz)$,
we know that there exist non-overlapping balls $\{B_k\}_{k\in\nn}$ in $B$
and a positive constant $\wz C_1$ as in Lemma \ref{C-Z}
such that
\begin{eqnarray}\label{cz1}
\frac{|f(x)-P_{B}^sf(x)|}{\vz(x,\vbl)}\le \sz
\quad \mbox{ for almost every } x\in B\bh[\cup_k B_k]
\end{eqnarray}
and
\begin{eqnarray}\label{cz2}
\sz\le\frac1{\vz(B_k,\vbl)}
\int_{B_k}{|f(x)-P_{B}^sf(x)|}\,dx
\le \wz C_1\sz
\mbox{ \ for all \ }k\in\nn,
\end{eqnarray}
which, together with \eqref{jn0}, implies that
\begin{eqnarray}\label{cz3}
\qquad\sum_{k=1}^\fz\vz(B_k,\vbl)
\le\frac1\sz
\int_{B}{|f(x)-P_{B}^sf(x)|}\,dx
\le \frac{\wz C_0}\sz\vz\lf(B,\vbl\r).
\end{eqnarray}
If $\sz\le\az$, \eqref{cz1} implies that, for almost every
$x\in B\bh[\cup_k B_k]$,
$\frac{|f(x)-P_{B}^sf(x)|}{\vz(x,\vbl)}\le \az$
and hence
\begin{eqnarray*}
\lz(\az,B)&&=
\vz\lf(\lf\{x\in B:\ \frac{|f(x)-P_{B}^sf(x)|}{\vz(x,\vbl)}>\az\r\},\vbl \r)\\
&&\le
\sum_{k=1}^\fz\vz\lf(\lf\{x\in B_k:\
\frac{|f(x)-P_{B}^sf(x)|}{\vz(x,\vbl)}>\az\r\},\vbl \r).\noz
\end{eqnarray*}
Thus, for $\wz C_0\le \sz\le\az$ and $0\le\gz\le\az$, it holds that
\begin{eqnarray}\label{jn2}
\lz(\az,B)
&&\le
\sum_{k=1}^\fz\lz(\az-\gz,B_k)\\
&&\hs+
\sum_{k=1}^\fz\vz\lf(\lf\{x\in B_k:\
\frac{|P_{B_k}^sf(x)-P_{B}^sf(x)|}{\vz(x,\vbl)}>\gz\r\},\vbl \r)\noz\\
&&=:\mi_1+\mi_2.\noz
\end{eqnarray}

By \eqref{jn1} and \eqref{cz3}, we have
\begin{eqnarray}\label{jn3}
\mi_1=
\sum_{k=1}^\fz\lz(\az-\gz,B_k)
&&\le
\sum_{k=1}^\fz\cf(\az-\gz)\vz(B_k,\vbl)\\
&&\le\frac{\wz C_0}\sz\cf(\az-\gz)\vz\lf(B,\vbl\r).\noz
\end{eqnarray}
On the other hand, by Lemma \ref{PBg} and \eqref{cz2},
we conclude that there exists a positive constant $\wz C_2$
as in Lemma \ref{PBg} such that, for all $x\in B_k$,
\begin{eqnarray}\label{jn4}
|P_{B_k}^sf(x)-P_{B}^sf(x)|
&&=|P_{B_k}^s(f-P_{B}^sf)(x)|
\le \frac {\wz C_2}{|B_k|}\int_{B_k}|f(x)-\ps f(x)|\,dx\\
&&\le \frac {\wz C_2\wz C_1\sz\vz(B_k,\vbl)}{|B_k|}.\noz
\end{eqnarray}

If $\vz\in\aa_1(\rn)$, then there exists a positive constant $\wz C_3$
such that
$$\frac{\vz(B_k,\vbl)}{|B_k|}\le \wz C_3\essinf_{x\in B_k}\vz(x,\vbl),$$
which, combining \eqref{jn4}, further implies that
\begin{eqnarray}\label{jn5}
&&\vz\lf(\lf\{x\in B_k:\
\frac{|P_{B_k}^sf(x)-P_{B}^sf(x)|}{\vz(x,\vbl)}>\gz\r\},\vbl \r)\\
&&\hs\le\vz\lf(\lf\{x\in B_k:\
\frac{\wz C_1\wz C_2\wz C_3\sz
\dis\essinf_{x\in B_k}\vz(x,\vbl)}{\vz(x,\vbl)}>\gz\r\},\vbl \r)\noz.
\end{eqnarray}

Now choose $\sz:=2\wz C_0$ and $\gz:=2\wz C_0\wz C_1\wz C_2\wz C_3$.
Then if $\az>\gz$, we have $\wz C_0<\sz<\az$
and $0<\gz<\az$ as required.
From \eqref{jn2} and \eqref{jn5}, it follows that
$$\mi_2
\le\sum_{k=1}^\fz\vz\lf(\lf\{x\in B_k:\
\frac{
\dis\essinf_{x\in B_k}\vz(x,\vbl)}{\vz(x,\vbl)}>1\r\},\vbl \r)=0,$$
which, combining \eqref{jn2} and \eqref{jn3}, implies that
$\lz(\az, B)\le \frac12\cf(\az-\gz)\vz(B,\vbl)$
for all $\az>\gz$ and $B\st B_0$.
Hence, $\cf(\az)\le \frac12\cf(\az-\gz)$ if $\az>\gz$.
If $m\in\nn$ and $\az$ satisfies
$m\gz<\az\le (m+1)\gz$, then
$\cf(\az)\le 2^{-1}\cf(\az-\gz)\le\cdots\le2^{-m}\cf(\az-m\gz)$.
Since $\cf(\az-m\gz)\le1$ and $m\ge\az/\gz-1$ for such $\az$,
it follows that
$$\cf(\az)\le 2^{-m}\le 2^{1-\az/\gz}=2e^{-(\frac1\gz\log2)\az}.$$
Therefore, with $C_1:=2$ and $C_2:=\frac1\gz\log2$,
for $\vz\in\aa_1(\rn)$ and $\az>\gz$,
we conclude that
\begin{eqnarray*}
\vz\lf(\lf\{x\in B_0: \frac{|f(x)-\psl f(x)|}{\vz(x,\vbl)}>\az\r\},
\vbl \r)
\le C_1 e^{-C_2\az}.
\end{eqnarray*}
This finishes the proof of Theorem \ref{J-N} in the case $\vz\in\aa_1(\rn)$.

Next, suppose $\vz\in\aa_q(\rn)$ for some $q\in(1,\fz)$.
From \eqref{cz3}, \eqref{jn2}, \eqref{jn4} and Lemma \ref{weight},
we deduce that
\begin{eqnarray*}
\mi_2&&\le\sum_{k\in\nn}\vz\lf(\lf\{x\in B_k:\
\frac {\wz C_2\wz C_1\sz\vz(B_k,\vbl)}{|B_k|\vz(x,\vbl)}>\gz\r\},\vbl \r)\noz\\
&&\le\sum_{k\in\nn}\wz C_3
\lf(\frac{\wz C_2\wz C_1\sz}\gz\r)^{q'}\vz\lf(B_k,\vbl \r)
\le\wz C_3\lf(\frac{\wz C_2\wz C_1\sz}\gz\r)^{q'}
\frac{\wz C_0}\sz\vz\lf(B,\vbl \r),\noz
\end{eqnarray*}
where $\wz C_3$ is the positive constant $C$ as in Lemma \ref{weight}.
Combining this with \eqref{jn2} and \eqref{jn3},
we see that, for all $\wz C_0\le \sz\le \az$,
$0<\gz<\gz$ and $B\st B_0$,
\begin{eqnarray}\label{jn6}
\lz(\az,B)&&\le\lf[\frac{\wz C_0\cf(\az-\gz)}\sz
+\wz C_3\lf(\frac{\wz C_2\wz C_1\sz}\gz\r)^{q'}
\frac{\wz C_0}\sz\r]\vz\lf(B,\vbl \r).
\end{eqnarray}
Now choose $\sz:=4^{q'}\wz C_0$, $\gz:=\az/2$
and $C_0:=\max\{\sz,\wz C_0\wz C_3(2\wz C_1\wz C_2)^{q'}\sz^{q'-1}\}$.
Then \eqref{jn6} implies that, for all $\az>C_0$,
\begin{eqnarray}\label{jn7}
\cf(\az)&&\le4^{-q'}\cf\lf(\frac\az2\r)
+C_0\az^{-q'}.
\end{eqnarray}
We now claim that, if $C_0<\az\le 2C_0$ and $m\in\zz_+$, then
\begin{eqnarray}\label{jn8}
\cf(2^m\az)&&\le(2C_0)^{q'}(2^m\az)^{-q'}.
\end{eqnarray}
Indeed, when $m=0$, it holds that $\cf(2^m\az)\le1\le(2C_0)^{q'}\az^{-q'}$
and hence \eqref{jn8} holds true in this case.
Assuming that \eqref{jn8} holds with $m$ replaced by $m-1$,
then from \eqref{jn7}, it follows that
\begin{eqnarray*}
\cf(2^m\az)
&&\le4^{-q'}\cf(2^{m-1}\az)+C_0(2^m\az)^{-q'}
\le4^{-q'}(2C_0)^{q'}(2^{m-1}\az)^{-q'}+C_0(2^m\az)^{-q'}\\
&&=(2C_0)^{q'}(2^m\az)^{-q'}(2^{-q'}+2^{-q'}C_0^{1-q'}).
\end{eqnarray*}
By this, together with the fact that $2^{-q'}+2^{-q'}C_0^{1-q'}<2^{-q'}+2^{-q'}<1$,
we know that \eqref{jn8} holds true for $m$.
Thus, by induction on $m$, we further conclude that the above claim holds true.
Moreover, by this claim, we further see that, if $\az>C_0$,
then $\cf(\az)\le (2C_0)^{q'}\az^{-q'}$,
which completes the proof of Theorem \ref{J-N}.
\end{proof}

\begin{rem}\label{r-JN}
(i) When $\vz(x,t):=t$ for all $x\in\rn$ and $t\in(0,\fz)$, and $s=0$,
then $\|\chi_B\|_\lv=|B|$ and hence the conclusion of
Theorem \ref{J-N} becomes that there exists a positive constant $C$ such that, for all balls $B\st\rn$,
$f\in\bmo(\rn)$ and $\az\in(0,\fz)$, it holds that
$$|\{x\in B:\ |f(x)-f_B|>\az\}|\le C e^{-\az/\|f\|_{\bmo(\rn)}}|B|,$$
which is the classical John-Nirenberg inequality
obtained by John and Nirenberg \cite{jn}.

(ii) When $\vz$ is as in Remark \ref{r-def}(i),
Theorem \ref{J-N} was proved by Li \cite{l08}.

(iii) When $\vz(x,t):=w(x)t$ for all $x\in\rn$ and $t\in(0,\fz)$,
$w\in A_{\fz}(\rn)$ and $s=0$,
Theorem \ref{J-N} is the John-Nirenberg inequality for the
weighted BMO space $\bmo_w(\rn)$,
which was obtained by Muckenhoupt and Wheeden \cite{mw76}.

(iv) When $\vz$ is as in Remark \ref{r-def}(ii)
with $p\in(0,1)$ and $w\in A_{\fz}(\rn)$, Theorem \ref{J-N} is new.
\end{rem}

Now, using Theorem \ref{J-N},
we establish some equivalent characterizations for $\lqs$.

\begin{thm}\label{char}
Let $s\in\zz_+$, $q\in[1,q(\vz)')$,
$\ez\in(n[\frac{q(\vz)}{i(\vz)}-1],\fz)$
and $\vz$ be a growth function.
Then, for all locally integrable functions $f$,
the following statements are mutually equivalent:
\begin{eqnarray*}
{\rm (i)}\ \|f\|_\lys&&:=\sup_{B\subset\rn}
\frac{1}{\|\chi_{B}\|_{L^{\vz}(\rn)}}
\int_{B}\lf|f(x)-\ps f(x)\r|\,dx<\fz;\\
{\rm (ii)}\ \|f\|_\lqs
&&:=\sup_{B\subset\rn}\frac{1}{\|\chi_{B}\|_{L^{\vz}(\rn)}}\\
&&\hs\times\lf\{
\int_{B}\lf[\frac{\lf|f(x)-\ps f(x)\r|}{\vz(x,\vb)}\r]^q\vz\lf(x,\vb\r)\,dx\r\}^{1/q}<\fz;\\
{\rm (iii)}\ \|f\|_{\wz{\cl_{\vz,q,s}}(\rn)}
&&:=\sup_{B\subset\rn}
\frac{1}{\|\chi_{B}\|_{L^{\vz}(\rn)}}\\
&&\hs\times\lf\{\inf_{p\in\cp_s(\rn)}
\int_{B}\lf[\frac{\lf|f(x)-p(x)\r|}{\vz(x,\vb)}\r]^q\vz\lf(x,\vb\r)\,dx\r\}^{1/q}<\fz;\\
{\rm (iv)}\ \|f\|_{\wz{\cl_{\vz,1,s}^\ez}(\rn)}
&&:=\sup_{B:=B(x_0,\dz)\subset\rn}
\frac{|B|}{\|\chi_{B}\|_{L^{\vz}(\rn)}}
\int_{\rn}
\frac{\dz^\ez\lf|f(x)-\ps f(x)\r|}{\dz^{n+\ez}+|x-x_0|^{n+\ez}}\,dx<\fz.
\end{eqnarray*}

Moreover,
$\|\cdot\|_\lys$,
$\|\cdot\|_\lqs$,
$\|\cdot\|_{\wz{\cl_{\vz,q,s}}(\rn)}$ and
$\|\cdot\|_{{\cl_{\vz,1,s}^\ez}(\rn)}$
are equivalent each other with the equivalent constants
independent of $f$.
\end{thm}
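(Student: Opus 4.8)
The plan is to establish the cyclic chain $\mathrm{(ii)}\Rightarrow\mathrm{(iii)}\Rightarrow\mathrm{(i)}\Rightarrow\mathrm{(ii)}$ together with the equivalence $\mathrm{(i)}\Leftrightarrow\mathrm{(iv)}$, all with constants uniform in $f$. The step $\mathrm{(ii)}\Rightarrow\mathrm{(iii)}$ is trivial, since $\ps f\in\cp_s(\rn)$ makes the infimum in (iii) no larger than its value at $p=\ps f$. For $\mathrm{(iii)}\Rightarrow\mathrm{(i)}$, I would fix a ball $B$ and an arbitrary $p\in\cp_s(\rn)$; using $\ps p=p$ one has $f-\ps f=(f-p)-\ps(f-p)$, so Lemma \ref{PBg} applied to $f-p$ gives $\int_B|f-\ps f|\,dx\ls\int_B|f-p|\,dx$. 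Then, writing $\int_B|f-p|\,dx=\int_B\frac{|f(x)-p(x)|}{\vz(x,\vb)}\,\vz(x,\vb)\,dx$ and applying H\"older's inequality on $B$ against the measure $\vz(x,\vb)\,dx$, whose total mass on $B$ is $\vz(B,\vb)=1$, one obtains
\begin{eqnarray*}
\int_B|f(x)-p(x)|\,dx\le\lf\{\int_B\lf[\frac{|f(x)-p(x)|}{\vz(x,\vb)}\r]^q\vz(x,\vb)\,dx\r\}^{1/q};
\end{eqnarray*}
taking the infimum over $p$ and dividing by $\bv$ yields $\|f\|_\lys\ls\|f\|_{\wz{\cl_{\vz,q,s}}(\rn)}$.

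The heart of the cycle is $\mathrm{(i)}\Rightarrow\mathrm{(ii)}$, which invokes the John-Nirenberg inequality (Theorem \ref{J-N}). For a fixed ball $B$, put $g:=|f-\ps f|/\vz(\cdot,\vb)$; by the layer-cake formula with respect to the measure $\vz(x,\vb)\,dx$ on $B$,
\begin{eqnarray*}
\int_Bg(x)^q\vz(x,\vb)\,dx=q\int_0^\fz\az^{q-1}\vz\lf(\lf\{x\in B:\ g(x)>\az\r\},\vb\r)\,d\az,
\end{eqnarray*}
and the distribution function is bounded by the minimum of $\vz(B,\vb)=1$ and the right-hand side of Theorem \ref{J-N}. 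After the substitution $\az=\|f\|_\lys\bv\,u$, this reduces the estimate to a one-variable integral: when $\vz\in\aa_1(\rn)$ the exponential bound makes $\int_0^\fz u^{q-1}\min\{1,C_1e^{-C_2u}\}\,du$ finite for every $q\in[1,\fz)$; when $\vz\in\aa_{q_0}(\rn)$ for some $q_0\in(1,\fz)$ the bound $[1+u]^{-q_0'}$ leads to $\int_0^\fz u^{q-1}(1+u)^{-q_0'}\,du$, which is finite exactly when $q<q_0'$. Since $q\in[1,q(\vz)')$ forces $q'>q(\vz)$, one fixes $q_0\in(q(\vz),q')$ with $\vz\in\aa_{q_0}(\rn)$, so that $q_0'>q$; in either case $\int_Bg^q\vz(\cdot,\vb)\,dx\ls(\|f\|_\lys\bv)^q$, and dividing by $\bv$ and taking the supremum over $B$ yields $\|f\|_\lqs\ls\|f\|_\lys$, which closes the cycle.

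For $\mathrm{(i)}\Leftrightarrow\mathrm{(iv)}$, write $B=B(x_0,\dz)$ and $B_j:=2^jB$. The direction $\mathrm{(iv)}\Rightarrow\mathrm{(i)}$ is immediate: for $x\in B$ the kernel $\dz^\ez/(\dz^{n+\ez}+|x-x_0|^{n+\ez})$ is comparable to $|B|^{-1}$, so $|B|^{-1}\int_B|f-\ps f|\,dx\ls\int_\rn\frac{\dz^\ez|f(x)-\ps f(x)|}{\dz^{n+\ez}+|x-x_0|^{n+\ez}}\,dx$, and multiplying by $|B|/\bv$ gives $\|f\|_\lys\ls\|f\|_{\wz{\cl_{\vz,1,s}^\ez}(\rn)}$. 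For $\mathrm{(i)}\Rightarrow\mathrm{(iv)}$ I would split $\rn=B\cup\bigcup_{j\ge0}(B_{j+1}\bh B_j)$. On $B$ the kernel is $\ls|B|^{-1}$ and $\int_B|f-\ps f|\,dx\le\|f\|_\lys\bv$, contributing $\ls\|f\|_\lys$ after multiplication by $|B|/\bv$. On $B_{j+1}\bh B_j$ the kernel is $\ls2^{-j(n+\ez)}|B|^{-1}$, so it remains to control $\sum_{j\ge0}2^{-j(n+\ez)}|B|^{-1}\int_{B_{j+1}}|f-\ps f|\,dx$. For this I telescope: with $B_0:=B$,
\begin{eqnarray*}
f-\ps f=(f-P_{B_{j+1}}^sf)+\sum_{i=1}^{j+1}\lf(P_{B_i}^sf-P_{B_{i-1}}^sf\r);
\end{eqnarray*}
the first summand contributes $\ls\|f\|_\lys\|\chi_{B_{j+1}}\|_\lv$, each difference equals $P_{B_{i-1}}^s(P_{B_i}^sf-f)$ so that Lemma \ref{PBg} gives $\sup_{B_{i-1}}|P_{B_i}^sf-P_{B_{i-1}}^sf|\ls|B_{i-1}|^{-1}\|f\|_\lys\|\chi_{B_i}\|_\lv$, and comparing this polynomial of degree at most $s$ on the nested balls $B_{i-1}\st B_{j+1}$ costs at most a factor $2^{(j-i)s}$. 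Altogether $\int_{B_{j+1}}|f-\ps f|\,dx\ls\|f\|_\lys\sum_{i=1}^{j+1}2^{(j-i)(n+s)}\|\chi_{B_i}\|_\lv$.

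The main obstacle is to make this double series converge. Using the properties of growth functions recalled in Section \ref{s2}, I would fix $p_1\in(0,i(\vz))$ and $q_1\in(q(\vz),\fz)$ with $\vz$ of uniformly lower type $p_1$ and $\vz\in\aa_{q_1}(\rn)$, chosen close enough to $i(\vz)$ and $q(\vz)$ that $\ez>n(q_1/p_1-1)$ --- possible because $\ez>n[q(\vz)/i(\vz)-1]$; this supplies the growth estimate $\|\chi_{B_i}\|_\lv\ls2^{inq_1/p_1}\bv$. Inserting this, interchanging the order of summation, and summing the resulting geometric series --- whose common ratios stay below $1$ precisely because $\ez>n(q_1/p_1-1)$ --- gives $\sum_{j\ge0}2^{-j(n+\ez)}|B|^{-1}\int_{B_{j+1}}|f-\ps f|\,dx\ls|B|^{-1}\|f\|_\lys\bv$; adding the contribution of $B$ and multiplying by $|B|/\bv$ then yields $\|f\|_{\wz{\cl_{\vz,1,s}^\ez}(\rn)}\ls\|f\|_\lys$. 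I expect the delicate points to be this telescoping estimate for the minimal polynomials (in the spirit of Taibleson and Weiss \cite{tw80}, combined with Lemma \ref{PBg} and the comparison of polynomials of bounded degree on nested balls) and the calibration of $p_1$ and $q_1$; the inseparability of the variables $x$ and $t$ in $\vz(x,t)$ is exactly what forces this calibration and makes the normalization $t=\vb$, together with the identity $\vz(B,\vb)=1$, indispensable throughout.
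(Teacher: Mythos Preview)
Your scheme matches the paper's closely: H\"older for the easy directions among (i)--(iii), the layer-cake formula combined with Theorem~\ref{J-N} for (i)$\Rightarrow$(ii), and dyadic annuli plus a telescoping comparison of the $P_{B_j}^sf$ for (i)$\Leftrightarrow$(iv). Your ordering of the first three differs slightly: you go (iii)$\Rightarrow$(i) directly via Lemma~\ref{PBg} and H\"older against the probability measure $\vz(\cdot,\vb)\,dx$ on $B$, whereas the paper proves (ii)$\Rightarrow$(i) by H\"older alone and then (iii)$\Rightarrow$(ii) separately, explicitly invoking $\vz\in\aa_{q'}(\rn)$ to control $\int_B[\vz(\cdot,\vb)]^{1-q}\,dx$. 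Your route is a little shorter and avoids that auxiliary weight estimate.

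The one substantive divergence is in (i)$\Rightarrow$(iv). The paper's telescoping bound $|P_{B_j}^sf(x)-P_{B_{j-1}}^sf(x)|\ls\|\chi_{B_j}\|_{L^\vz(\rn)}|B_j|^{-1}\|f\|_{\lys}$ comes from Lemma~\ref{PBg} applied on $B_{j-1}$, hence is literally stated only for $x\in B_{j-1}$; the paper then sums in $j$ and integrates the result over $B_k$ without inserting any extrapolation factor, so its series involves only the exponent $n+\ez-nq_0/p_0$. You are more scrupulous and insert the polynomial-comparison factor $2^{(j-i)s}$ for passing from $B_{i-1}$ to $B_{j+1}$, but this is exactly where your convergence claim breaks: after interchanging the order of summation one obtains $\sum_{i\ge1}2^{\,i(nq_1/p_1-n-s)}\sum_{j\ge i-1}2^{\,j(s-\ez)}$, and the inner $j$-series has common ratio $2^{\,s-\ez}$. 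Thus the condition $\ez>n(q_1/p_1-1)$ alone does \emph{not} make both geometric ratios smaller than $1$; you also need $\ez>s$, which the theorem does not assume. In short, the paper glosses over the extrapolation and thereby reaches a single convergent geometric series, while your more careful bookkeeping forces an extra hypothesis; if you want your version to go through in full generality you must either reorganize the telescoping to avoid the $2^{(j-i)s}$ loss or argue separately in the regime $s\ge n(q_1/p_1-1)$.
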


\begin{proof}
We first prove that (i) is equivalent to (ii).

By H\"older's inequality, for any ball $B\st\rn$ and $q\in(1,\fz)$,
we see that
\begin{eqnarray*}
&&\int_{B}\lf|f(x)-\ps f(x)\r|\,dx\\
&&\hs\le
\lf\{\int_{B}\lf[\frac{\lf|f(x)-\ps f(x)\r|}{\vz(x,\vb)}\r]^q
\vz\lf(x,\vb\r)\,dx\r\}^{1/q}\!\!
\lf\{\int_{B}\vz\lf(x,\vb\r)\,dx\r\}^{1/q'}\\
&&\hs=
\lf\{\int_{B}\lf[\frac{\lf|f(x)-\ps f(x)\r|}{\vz(x,\vb)}\r]^q
\vz\lf(x,\vb\r)\,dx\r\}^{1/q}.
\end{eqnarray*}
Thus, (ii) implies (i).

Conversely, if $\vz\in\aa_1(\rn)$, then $q(\vz)=1$.
By Theorem \ref{J-N},
for any $B\st\rn$ and $q\in(1,\fz)$,
we conclude that
\begin{eqnarray*}
&&\int_{B}\lf[\frac{\lf|f(x)-\ps f(x)\r|}{\vz(x,\vb)}\r]^q
\vz\lf(x,\vb\r)\,dx\\
&&\hs=q\int_0^\fz\az^{q-1}
\vz\lf(\lf\{x\in B:\ \frac{\lf|f(x)-\ps f(x)\r|}{\vz(x,\vb)}>\az\r\},\vb\r)
\,d\az\\
&&\hs\ls q\int_0^\fz\az^{q-1}
\exp\lf\{-\frac{C_2\az}{\|f\|_{\lys}\|\chi_B\|_\lv}\r\}\,d\az
\sim \|f\|_{\lys}^q\|\chi_B\|_\lv^q.
\end{eqnarray*}

If $\vz\notin\aa_1(\rn)$, then for any $r>q(\vz)$,
$\vz\in\aa_r(\rn)$ and
there exists $\ez\in(0,r-q(\vz))$ such that $\vz\in\aa_{r-\ez}(\rn)$.
Therefore, by Theorem \ref{J-N},
for any $B\st\rn$ and $q\in[1,(r-\ez)')$,
we see that
\begin{eqnarray*}
&&\int_{B}\lf[\frac{\lf|f(x)-\ps f(x)\r|}{\vz(x,\vb)}\r]^q
\vz\lf(x,\vb\r)\,dx\\
&&\hs=q\int_0^\fz\az^{q-1}
\vz\lf(\lf\{x\in B:\ \frac{\lf|f(x)-\ps f(x)\r|}{\vz(x,\vb)}>\az\r\},\vb\r)
\,d\az\\
&&\hs\ls q\int_0^\fz\az^{q-1}
\lf[1+\frac{\az}
{\|f\|_{\lys}\|\chi_B\|_\lv}\r]^{-(r-\ez)'}\,d\az
\sim \|f\|_{\lys}^q\|\chi_B\|_\lv^q,
\end{eqnarray*}
which implies that (ii) holds for all $q\in[1,q(\vz)')$.
Thus, (i) is equivalent to (ii).

Next we prove that (ii) is equivalent to (iii).
Obliviously, (ii) implies (iii).

Conversely, since $q\in[1,q(\vz)')$,
it follows that $\vz\in\aa_{q'}(\rn)$ and hence
\begin{eqnarray*}
&&\frac1{|B|^{q'}}
\lf\{\int_{B}\lf[\vz\lf(x,\vb\r)\r]^{1-q}\,dx\r\}^{q'/q}\\
&&\hs=\frac1{|B|^{q'}}\int_B\vz\lf(x,\vb\r)\,dx
\lf\{\int_{B}\lf[\vz\lf(x,\vb\r)\r]^{-1/(q'-1)}\,dx\r\}^{q'/q}\ls1,
\end{eqnarray*}
which, together with $\vz(B,\vb)=1$,
Lemma \ref{PBg} and H\"older's inequality,
further implies that,
for any $B\st\rn$, $q\in(1,\fz)$ and $p\in\cp_s(\rn)$,
\begin{eqnarray*}
&&\lf\{\int_{B}\lf[\frac{\lf|\ps(p-f)(x)\r|}{\vz(x,\vb)}\r]^q
\vz\lf(x,\vb\r)\,dx\r\}^{1/q}\\
&&\hs\ls\frac1{|B|}\int_B|p(x)-f(x)|\,dx
\lf\{\int_{B}\lf[\vz\lf(x,\vb\r)\r]^{1-q}\,dx\r\}^{1/q}\\
&&\hs\ls\lf\{\int_{B}\lf[\frac{\lf|f(x)-p(x)\r|}{\vz(x,\vb)}\r]^q
\vz\lf(x,\vb\r)\,dx\r\}^{1/q}\\
&&\hs\hs\times\lf\{\int_B\vz\lf(x,\vb\r)\,dx\r\}^{1/q'}\frac1{|B|}
\lf\{\int_{B}\lf[\vz\lf(x,\vb\r)\r]^{1-q}\,dx\r\}^{1/q}\\
&&\hs\ls\lf\{\int_{B}\lf[\frac{\lf|f(x)-p(x)\r|}{\vz(x,\vb)}\r]^q
\vz\lf(x,\vb\r)\,dx\r\}^{1/q}.
\end{eqnarray*}
Thus, from this, it follows that
\begin{eqnarray*}
&&\lf\{\int_{B}\lf[\frac{\lf|f(x)-\ps f(x)\r|}{\vz(x,\vb)}\r]^q
\vz\lf(x,\vb\r)\,dx\r\}^{1/q}\\
&&\hs\le\lf\{\int_{B}\lf[\frac{\lf|f(x)-p(x)\r|}{\vz(x,\vb)}\r]^q
\vz\lf(x,\vb\r)\,dx\r\}^{1/q}\\
&&\hs\hs+\lf\{\int_{B}\lf[\frac{\lf|\ps(p-f)(x)\r|}{\vz(x,\vb)}\r]^q
\vz\lf(x,\vb\r)\,dx\r\}^{1/q}\\
&&\hs\ls\lf\{\int_{B}\lf[\frac{\lf|f(x)-p(x)\r|}{\vz(x,\vb)}\r]^q
\vz\lf(x,\vb\r)\,dx\r\}^{1/q}.
\end{eqnarray*}
Namely, (iii) implies (ii) and hence (ii) is equivalent to (iii).

Finally we prove that (iv) is equivalent to (i).
Obviously, (iv) implies (i).

Conversely,
for any $k\in\zz_+$, let $B_k:=2^kB$.
Then, for all $k\in\zz_+$ and $x\in B_k$, by Lemma \ref{PBg}, we have
\begin{eqnarray}\label{5.1}
|P_{B_{k+1}}^sf(x)-P_{B_k}^sf(x)|
&&=|P_{B_k}^s(f-P_{B_{k+1}}^sf)(x)|\\
&&\le\frac{2^n}{|B_{k+1}|}
\int_{B_{k+1}}|f(x)-P_{B_{k+1}}^sf(x)|\,dx\noz\\
&&\le2^n
\frac{\|\chi_{B_{k+1}}\|_{L^\vz(\rn)}}{|B_{k+1}|}
\|f\|_{\lys}.\noz
\end{eqnarray}
Since $\ez\in(n[\frac{q(\vz)}{i(\vz)}-1],\fz)$, it follows that
there exist $p_0\in(0,i(\vz))$ and $q_0\in(q(\vz),\fz)$ such that
$\ez>n(\frac{q_0}{p_0}-1)$. Thus, $\vz\in\aa_{q_0}(\rn)$ and $\vz$ is of uniformly lower type $p_0$, which further
implies that, for all $j\in\zz_+$,
$$\vz\lf(B_j, 2^{-jnq_0/p_0}\|\chi_{B}\|^{-1}_{L^\vz(\rn)}\r)\ls2^{-jnq_0}
\vz\lf(B_j,\|\chi_{B}\|^{-1}_{L^\vz(\rn)}\r)\ls1.
$$
From this, we deduce that, for all $j\in\zz_+$, $\|\chi_{B_j}\|_{L^\vz(\rn)}
\ls2^{jnq_0/p_0}\|\chi_{B}\|_{L^\vz(\rn)}$,
which, together with \eqref{5.1}, implies that, for all $k\in\nn$,
\begin{eqnarray*}
|P_{B_k}^sf(x)-P_B^sf(x)|&\le&
\sum_{j=1}^k|P_{B_j}^sf(x)-P_{B_{j-1}}^sf(x)|
\le
2^n\|f\|_{\lys}\sum_{j=1}^k\frac{\|\chi_{B_{j}}\|_{L^\vz(\rn)}}{|B_{j}|}
\\
&\ls&\lf\{\sum_{j=1}^{k}2^{jn(q_0/p_0-1)}\r\}
\frac{\|\chi_{B}\|_{L^\vz(\rn)}}{|B|}
\|f\|_{\lys}\\ \nonumber
&\ls&2^{kn(q_0/p_0-1)}\frac{\|\chi_{B}\|_{L^\vz(\rn)}}{|B|}
\|f\|_{\lys}.
\end{eqnarray*}
By this, we conclude that
\begin{eqnarray*}
&&\int_\rn\frac{\dz^\ez|f(x)-P_B^sf(x)|}{\dz^{n+\ez}
+|x-x_0|^{n+\ez}}\,dx\\
&&\hs\le \int_{B}\frac{\dz^\ez|f(x)-P_B^sf(x)|}{\dz^{n+\ez}
+|x-x_0|^{n+\ez}}\,dx+\sum_{k=0}^\fz\int_{B_{k+1}\setminus B_{k}}\cdots\\
&&\hs\ls\frac1{|B|}\int_{B}|f(x)-P_B^sf(x)|\,dx
+\sum_{k=1}^\fz(2^k\dz)^{-(n+\ez)}\dz^\ez\int_{B_k}
|f(x)-P_B^sf(x)|\,dx\\
&&\hs\ls\frac{\|\chi_{B}\|_{L^\vz(\rn)}}{|B|}
\|f\|_{\lys}\\
&&\hs\hs+\sum_{k=1}^\fz2^{-k(n+\ez)}\frac1{|B|}
\int_{B_k}\lf[|f(x)-P_{B_k}^sf(x)|\,dx+|P_{B_k}^sf(x)-P_B^sf(x)|\r]\,dx\\
&&\hs\ls\lf\{\sum_{k=1}^\fz2^{-k(n+\ez-nq_0/p_0)}\r\}
\frac{\|\chi_{B}\|_{L^\vz(\rn)}}{|B|}
\|f\|_{\lys}\ls\frac{\|\chi_{B}\|_{L^\vz(\rn)}}{|B|}
\|f\|_{\lys},
\end{eqnarray*}
which completes the proof of Theorem \ref{char}.
\end{proof}

\begin{rem}\label{r-char}
(i) When $\vz$ is as in Remark \ref{r-def}(i),
Theorem \ref{char} was proved by Taibleson and Weiss \cite{tw80}.

(ii) When $\vz$ is as in Remark \ref{r-def}(ii) with $w\in A_{1}(\rn)$,
Theorem \ref{char} was obtained in \cite{yy11}.

(iii) When $\vz$ is as in Remark \ref{r-def}(ii) with $p\in(0,1)$,
Theorem \ref{char} is new.
\end{rem}

%%%------------------------------------%--------------------------------
%%%--------^_^--------
%%%------------------------------------%--------------------------------

\section{Dual Spaces of Musielak-Orlicz Hardy Spaces\label{s3}}

\hskip\parindent In this section, we prove that the dual space of $\hv$
is $\lqs$ for all $q\in[1,q(\vz)')$ and $s\in[m(\vz),\fz)\cap\zz_+$.

In what follows, we denote by $\cs(\rn)$ the
\emph{space of all Schwartz functions} and
by $\cs'(\rn)$ its \emph{dual space}
(namely, the \emph{space of all tempered distributions}).
For $m\in\nn$, let
$$\cs_m(\rn)
:=\lf\{\psi\in\cs(\rn):\ \sup_{x\in\rn}\sup_{\bz\in\zz^n_+,\,|\bz|\le m+1}
(1+|x|)^{(m+2)(n+1)}|\partial^\bz_x\psi(x)|\le1\r\}.$$
Then for all $f\in\cs'(\rn)$,
the \emph{nontangential grand maximal function} $f^\ast_m$
of $f$ is defined by setting, for all $x\in\rn$,
\begin{equation*}
f^\ast_m(x):=\sup_{\psi\in\cs_m(\rn)}\sup_{|y-x|<t,\,t\in(0,\fz)}|f\ast\psi_t(y)|,
\end{equation*}
where for all $t\in(0,\fz)$, $\psi_t(\cdot):=t^{-n}\psi(\frac{\cdot}{t})$.
When $m(\vz):=\lfz n[q(\vz)/i(\vz)-1]\rfz$, where $q(\vz)$ and $i(\vz)$ are,
respectively, as in \eqref{qvz} and \eqref{ivz},
we \emph{denote $f^\ast_{m(\vz)}$ simply
by $f^\ast$}.

Now we recall the definition of the Musielak-Orlicz Hardy space $H^\vz(\rn)$ introduced
by Ky \cite{ky} as follows.

\begin{defn}\label{d-hd}
Let $\vz$ be a growth function. The \emph{Musielak-Orlicz Hardy space
$H^\vz(\rn)$} is defined to be the space of all $f\in\cs'(\rn)$ such that
$f^\ast\in L^\vz(\rn)$
with the \emph{quasi-norm}
$\|f\|_{H^\vz(\rn)}:=\|f^\ast\|_{L^\vz(\rn)}$.
\end{defn}

In order to prove our main result,
we need to introduce the atomic Musielak-Orlicz Hardy space.
For any ball $B$ in $\mathbb R^n$, the \emph{space $L_\varphi^q(B)$} for
$q\in[1,\infty]$ is defined to be the set of all measurable functions $f$ on
$\mathbb R^n$ supported in $B$ such that
\begin{eqnarray*}
\; \|f\|_{L_\varphi^q(B)}:= \left\lbrace
\begin{array}{l l}
\displaystyle \sup_{t\in(0,\fz)}\lf[\frac1{\varphi(B,t)}{\int_{\mathbb
R^n}|f(x)|^q\varphi(x,t)dx}\r]^{1/q}<\infty,
\ &q\in [1,\fz);\\
\\
\|f\|_{L^\infty(\rn)}<\infty,
&q=\infty.
\end{array} \right.
\end{eqnarray*}

Now, we recall the atomic Musielak-Orlicz Hardy spaces introduced by Ky \cite{ky}
as follows. A triplet $(\varphi,q,s)$ is said to be \emph{admissible}, if
$q\in(q(\varphi),\infty]$ and $s\in\mathbb N$ satisfies $s\geq
m(\varphi)$. A measurable function $a$ is called a \emph{$(\varphi,q,s)$-atom}
if it satisfies the following three conditions:

{\rm (i)} $a\in L_\varphi^q(B)$ for some ball $B$;

{\rm (ii)} $\|a\|_{L_\varphi^q(B)}\le \|\chi_B\|_{L^\varphi(\rn)}^{-1}$;

{\rm (iii)} $\int_{\mathbb R^n}a(x)x^\alpha dx=0$ for any $|\alpha|\le s$,
where $\az:=(\az_1,\ldots,\az_n)\in\zz_+^n$
and $|\az|:=\az_1+\cdots+\az_n$.

The \emph{atomic Musielak-Orlicz Hardy space}
$H_{\mathrm{at}}^{\varphi,q,s}(\mathbb R^n)$ is defined to be the space of all
$f\in\mathcal{S}'(\mathbb R^n)$ that can be
represented as a sum of multiples of $(\varphi,q,s)$-atoms, that
is, $f=\sum_{j=1}^\fz b_j$ in $\mathcal{S}'(\mathbb R^n)$,
where, for each $j$, $b_j$ is a multiple of some $(\varphi,q,s)$-atom supported in
some ball $B_j$, with the property
$\sum_{j=1}^\fz\varphi(B_j,\|b_j\|_{L_\varphi^q(B_j)})<\infty.$
For any given sequence of multiples of $(\varphi,q,s)-$atoms,
$\{b_j\}_{j\in\nn}$, let
$$\Lambda_q(\{b_j\}_{j\in\nn}):=\inf\lf\{\lambda>0:\ \sum_{j=1}^\fz
\varphi\lf(B_j,\frac{\|b_j\|_{L_\varphi^q(B_j)}}{\lambda}\r)\le 1\r\}$$
and then define
$$ \|f\|_{H_{\mathrm{at}}^{\varphi,q,s}(\rn)}
:=\inf\lf\{\Lambda_q(\{b_j\}_{j\in\nn}):\
f=\sum_{j=1}^\fz b_j\quad\text{in }\,\ \mathcal{S}'(\mathbb R^n)\r\},$$
where the infimum is taken over all decompositions of $f$ as above.
We use $H^{\vz,q,s}_{\fin}(\rn)$ to denote the \emph{set of
all finite combinations of $(\vz,q,s)$-atoms}.
The norm of $f$ in $\hvf$ is defined by
$$ \|f\|_\hvf
:=\inf\lf\{\Lambda_q(\{b_j\}_{j=1}^k):\
f=\sum_{j=1}^kb_j\quad\text{in }\,\ \mathcal{S}'(\mathbb R^n)\r\},$$
where the infimum is taken over all finite decompositions of $f$.
It is easy to see that
$\hvf$ is dense in $\hva$.

In order to obtain the finite atomic decomposition,
Ky \cite{ky} introduced a
\emph{uniformly locally dominated convergence condition}
as follows:

Let $K$ be a compact set in $\rn$.
Let $\{f_m\}_{m\in\nn}$ be a sequence of measurable functions
such that $f_m(x)$ tends to $f(x)$ for almost every $x\in\rn$ as $m\to\fz$.
If there exists a nonnegative measurable function $g$ such that
$|f_m(x)|\le g(x)$ for all $m\in\nn$ and almost every $x\in\rn$, and
$\sup_{t>0}\int_Kg(x)\frac{\vz(x,t)}{\int_K\vz(y,t)\,dy}\,dx<\fz,$
then
$\sup_{t>0}\int_K|f_m(x)-f(x)|\frac{\vz(x,t)}{\int_K\vz(y,t)\,dy}\,dx$
tends $0$ as $m\to\fz$.

Observe that the growth functions $\vz(x,t):=w(x)\Phi(x)$,
with $w\in A_\fz(\rn)$ and $\Phi$ being an Orlicz function,
and $\vz(x,t)=\frac{t^p}{[\log(e+|x|)+\log(e+t^p)]^p}$, with $p\in(0,1]$
for all $x\in\rn$ and $t\in(0,\fz)$,
satisfy the uniformly locally dominated convergence condition.

The following Lemmas \ref{atom}, \ref{at-ch} and \ref{at-fin}
are just \cite[Lemma 4.4, Theorems 3.1 and Theorem 3.4]{ky}.

\begin{lem}\label{atom}
Let $(\varphi,q,s)$ be admissible.
Then there exists a positive constant $C$
such that, for all $f=\sum_{j=1}^\fz b_j\in\hva$,
$$\sum_{j=1}^\fz\|b_j\|_{L_\vz^q(B_j)}\|\chi_{B_j}\|_\lv
\le C\blz_q(\{b_j\}_{j\in\nn}),$$
where for any $j\in\nn$, $b_j$ is a multiple of
$(\vz, q,s)$-atom associated with the ball $B_j$.
\end{lem}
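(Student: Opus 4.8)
The plan is to recognize that this inequality is a purely \emph{modular} estimate: it compares the quantity $\blz_q(\{b_j\}_{j\in\nn})$, built from the Musielak--Orlicz modular $\sum_j\vz(B_j,\cdot)$, with the Luxembourg quasi-norms $\|\chi_{B_j}\|_\lv$, and the atomic structure of the $b_j$ plays no role beyond the facts that $b_j$ is supported in $B_j$ and $\|b_j\|_{L_\vz^q(B_j)}<\fz$. First I would fix an arbitrary $\lz\in(\blz_q(\{b_j\}_{j\in\nn}),\fz)$ with $\sum_{j\in\nn}\vz(B_j,\|b_j\|_{L_\vz^q(B_j)}/\lz)\le1$ --- such $\lz$ exist by the definition of $\blz_q$ --- and defer letting $\lz\downarrow\blz_q(\{b_j\}_{j\in\nn})$ to the very end. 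Writing $\psi_j:=\vz(B_j,\|b_j\|_{L_\vz^q(B_j)}/\lz)$ and $\tz_j:=\|b_j\|_{L_\vz^q(B_j)}\|\chi_{B_j}\|_\lv/\lz$, the goal reduces to the pointwise comparison $\tz_j\ls\psi_j$, uniform in $j$, since then $\sum_j\tz_j\ls\sum_j\psi_j\le1$, and multiplying through by $\lz$ and taking the infimum over $\lz$ yields the claim.

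To prove $\tz_j\ls\psi_j$ I would combine two ingredients recalled in Section \ref{s1}: the normalization $\vz(B_j,\|\chi_{B_j}\|_\lv^{-1})=1$, and the fact that $\vz$ is of uniformly lower type $p$ for some $p\in(0,1]$ and of uniformly upper type $1$. The latter I would repackage as a single one-sided bound, $\vz(x,\tz t)\gs\min\{\tz,\tz^p\}\vz(x,t)$ for all $x$, $t$ and $\tz\in(0,\fz)$: for $\tz\le1$ this is the uniformly upper type $1$ estimate applied with ratio $1/\tz\ge1$, and for $\tz\ge1$ it is the uniformly lower type $p$ estimate applied with ratio $1/\tz\le1$. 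Taking $t=\|\chi_{B_j}\|_\lv^{-1}$ and $\tz=\tz_j$, integrating over $B_j$ and using the normalization gives $\psi_j\gs\min\{\tz_j,\tz_j^p\}$. If $\tz_j\le1$, this is $\tz_j\ls\psi_j$ directly; if $\tz_j>1$, it gives $\tz_j^p\ls\psi_j\le1$ (the bound $\psi_j\le1$ coming from $\sum_k\psi_k\le1$), hence $\tz_j\ls\psi_j^{1/p}\le\psi_j$ since $1/p\ge1$ and $\psi_j\in[0,1]$. In either case $\tz_j\ls\psi_j$ with a constant depending only on $p$ and the type constants of $\vz$, which is all that is needed.

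The argument is short, so there is no serious obstacle; the only points meriting care are bookkeeping ones. First, one must apply the type conditions in the right direction --- a \emph{lower} bound for $\vz(B_j,\tz_j t)$ is wanted, so uniformly upper type $1$ enters in the regime $\tz_j\le1$ and uniformly lower type $p$ in the regime $\tz_j\ge1$, not vice versa --- and one must remember to use $\psi_j\le1$ to absorb the exponent $1/p$ in the large-$\tz_j$ case. Second, one should note that $0<\|\chi_{B_j}\|_\lv<\fz$ for every ball $B_j$, which follows from $\vz\in\aa_\fz(\rn)$ (so that $\vz(\cdot,1)$ is locally integrable) together with the uniform upper/lower type bounds, ensuring all the quantities above are well defined; this, however, is standard and I would merely remark on it rather than prove it in detail.
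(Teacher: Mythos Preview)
Your argument is correct. The paper does not give its own proof of this lemma; it simply cites \cite[Lemma~4.4]{ky}. What you have written is precisely the standard modular argument one would expect (and is essentially how the cited result is proved): reduce to the termwise estimate $\tz_j\ls\psi_j$, use the normalization $\vz(B_j,\|\chi_{B_j}\|_\lv^{-1})=1$, and invert the uniformly upper type~$1$ / uniformly lower type~$p$ inequalities to compare $\psi_j=\vz(B_j,\tz_j\|\chi_{B_j}\|_\lv^{-1})$ with $\tz_j$. The case split $\tz_j\le1$ versus $\tz_j>1$ together with $\psi_j\le\sum_k\psi_k\le1$ and $1/p\ge1$ handles the exponent discrepancy exactly as you describe. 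One small remark: the normalization $\vz(B,\|\chi_B\|_\lv^{-1})=1$ (rather than merely $\le1$) is used freely in the paper as well, and follows from continuity of $t\mapsto\vz(B,t)$, which in turn comes from the type bounds and dominated convergence; you are right not to belabor it.
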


\begin{lem}\label{at-ch}
Let $(\varphi,q,s)$ be admissible. Then $H^\varphi(\mathbb
R^n)=H_{\mathrm{at}}^{\varphi,q,s}(\mathbb R^n)$ with equivalent norms.
\end{lem}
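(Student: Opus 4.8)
The plan is to establish the two inclusions $\hva\hookrightarrow\hv$ and $\hv\hookrightarrow\hva$ with the matching norm estimates, the first being fairly routine and the second relying on a Calder\'on--Zygmund decomposition adapted to the grand maximal function. For $\hva\hookrightarrow\hv$, I would first estimate $b^\ast$ for a single multiple $b=\mu a$ of a $(\vz,q,s)$-atom supported in a ball $B=B(x_0,r)$: on $2B$ one dominates $b^\ast$ by a constant times the Hardy--Littlewood maximal function $\cm b$ and uses the $L^q$-boundedness of $\cm$ with respect to the weight $\vz(\cdot,t)$, uniformly in $t$ (available since $q>q(\vz)$ forces $\vz\in\aa_q(\rn)$); on $\rn\setminus 2B$ one uses the $s$ vanishing moments of $a$ together with a Taylor expansion of $\psi_t$ to obtain the pointwise decay $b^\ast(x)\ls\mu\|a\|_{L^1(B)}\,r^{s+1}\,|x-x_0|^{-(n+s+1)}$, the gain of $s+1$ powers over $n$ being exactly what makes the tail integrable against $\vz$ (here one uses $s+1>n[q(\vz)/i(\vz)-1]$, which holds because $s\ge m(\vz)$). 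Combining these two estimates with the uniformly upper type $1$ and uniformly lower type $p$ (with $p\le1$) properties of $\vz$, and then summing over an atomic decomposition $f=\sum_jb_j$ by means of Lemma \ref{atom}, gives $\int_\rn\vz(x,f^\ast(x)/\blz)\,dx\ls1$ for $\blz\sim\blz_q(\{b_j\}_j)$, hence $\|f\|_\hv\ls\|f\|_\hva$; the gap between finite and infinite combinations is bridged via the density of $\hvf$ in $\hva$.

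For the reverse inclusion, given $f\in\hv$ and $k\in\zz$, set $\boz_k:=\{x\in\rn:\ f^\ast(x)>2^k\}$, an open set of finite $\vz(\cdot,t)$-measure for each $t$. I would take a Whitney covering $\boz_k=\bigcup_iQ_i^k$ of bounded overlap together with a subordinate smooth partition of unity $\{\xi_i^k\}_i$, and set $b_i^k:=(f-c_i^k)\xi_i^k$, choosing $c_i^k\in\cp_s(\rn)$ so that $\int_\rn b_i^k(x)x^\az\,dx=0$ for all $|\az|\le s$; the existence and control of $c_i^k$ follow by the argument behind Lemma \ref{PBg}. The analytic heart of the matter is the pointwise bound $\|g_k\|_{L^\fz(\rn)}\ls 2^k$ for the \emph{good} part $g_k:=f-\sum_ib_i^k$, which uses that $|f|\le f^\ast\le 2^k$ off $\boz_k$ together with $\|c_i^k\|_{L^\fz(Q_i^k)}\ls 2^k$, the latter being a consequence of Lemma \ref{PBg} and the fact that each Whitney ball $Q_i^k$ touches $\rn\setminus\boz_k$. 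One then has $f=\sum_k(g_{k+1}-g_k)$ in $\cs'(\rn)$, and a rearrangement identifies $g_{k+1}-g_k=\sum_ia_i^k$, where each $a_i^k$ is supported in a fixed dilate of $Q_i^k$, has vanishing moments up to order $s$, and satisfies $\|a_i^k\|_{L^\fz(\rn)}\ls2^k$, hence $\|a_i^k\|_{L_{\vz}^q(Q_i^k)}\ls2^k$; thus $a_i^k$ is a multiple of a $(\vz,q,s)$-atom with coefficient comparable to $2^k\|\chi_{Q_i^k}\|_{L^\vz(\rn)}$.

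It then remains to verify $\sum_{k,i}\vz(Q_i^k,2^k/\blz)\ls1$ for $\blz\sim\|f\|_\hv$. Bounded overlap gives $\sum_i\vz(Q_i^k,2^k/\blz)\ls\vz(\boz_k,2^k/\blz)$, and writing $\boz_k=\bigcup_{j\ge k}(\boz_j\setminus\boz_{j+1})$ and invoking the uniformly lower type $p\le1$ property one gets $\vz(\boz_k,2^k/\blz)\ls\sum_{j\ge k}2^{(k-j)p}\vz(\boz_j\setminus\boz_{j+1},2^j/\blz)$; interchanging the two sums (the geometric series in $k$ converging since $p>0$) and using $f^\ast>2^j$ on $\boz_j$ leads to $\sum_{k,i}\vz(Q_i^k,2^k/\blz)\ls\int_\rn\vz(x,f^\ast(x)/\blz)\,dx\le1$. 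I expect the main obstacle to be the Calder\'on--Zygmund decomposition itself in the distributional setting --- above all, establishing the $L^\fz$-bound for $g_k$ and the $\cs'$-convergence of $f=\sum_k(g_{k+1}-g_k)$ --- since the remaining steps reduce to bookkeeping with the uniform-type and $\aa_\fz$ properties of $\vz$. Throughout, one must also cope with the inseparability of the variables $x$ and $t$ in $\vz(x,t)$, which is precisely where the structural hypotheses of Definition \ref{d-vz} are used repeatedly.
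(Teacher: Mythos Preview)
The paper does not actually prove Lemma~\ref{at-ch}: it is quoted verbatim from Ky's work \cite[Theorem~3.1]{ky}, together with Lemmas~\ref{atom} and~\ref{at-fin}. So there is no ``paper's own proof'' to compare your proposal against.

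That said, your outline is essentially the argument Ky carries out. The inclusion $\hva\hookrightarrow\hv$ via a near/far split around the supporting ball, using the $\aa_q$-boundedness of the Hardy--Littlewood maximal operator (uniformly in $t$) on the near part and the moment-driven pointwise decay $|x-x_0|^{-(n+s+1)}$ on the far part, is exactly the standard scheme; the admissibility condition $s\ge m(\vz)$ is precisely what makes the tail integrable against $\vz$ once one exploits the uniformly lower type $p$ property and $\vz\in\aa_{q_0}(\rn)$ for suitable $q_0$. For the converse, the Calder\'on--Zygmund decomposition at heights $2^k$ on the level sets of $f^\ast$, with Whitney balls and polynomial corrections $c_i^k$ chosen to impose vanishing moments up to order $s$, is again Ky's route; the $L^\fz$-bound on $g_k$ and the telescoping identity $f=\sum_k(g_{k+1}-g_k)$ are indeed the technical core, and your closing summation $\sum_{k,i}\vz(Q_i^k,2^k/\blz)\ls1$ via bounded overlap and the lower type $p$ property is the right computation.

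One small caution: in the first inclusion, passing from a single multiple of an atom to an infinite sum $f=\sum_jb_j$ does not follow from Lemma~\ref{atom} by mere linearity, since the target space is only quasi-Banach. Ky's actual argument first proves, for each multiple $b_j$ of an atom, an integral estimate of the form $\int_\rn\vz(x,b_j^\ast(x))\,dx\ls\vz(B_j,\|b_j\|_{L_\vz^q(B_j)})$, and then uses the sublinearity of $f\mapsto f^\ast$ together with the uniformly lower type $p\le1$ property of $\vz$ to sum; Lemma~\ref{atom} enters only at the very end to convert $\sum_j\vz(B_j,\|b_j\|_{L_\vz^q(B_j)}/\blz)\le1$ into a bound by $\blz_q(\{b_j\})$. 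Your sketch slightly conflates these two steps, but the plan is otherwise sound.
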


\begin{lem}\label{at-fin}
Let $\vz$ be a growth function satisfying uniformly locally dominated
convergence condition, $(\vz,q,s)$ admissible and $q\in(q(\vz),\fz)$.
Then $\|\cdot\|_\hvf$ and $\|\cdot\|_\hv$ are equivalent
quasi-norms on $\hvf$.
\end{lem}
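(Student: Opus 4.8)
The plan is to establish the two bounds $\|f\|_\hv\ls\|f\|_\hvf$ and $\|f\|_\hvf\ls\|f\|_\hv$ on $\hvf$ separately. The first is immediate: padding a finite atomic decomposition $f=\sum_{j=1}^k b_j$ with zeros exhibits it as an admissible infinite atomic decomposition with the same value of the $\blz_q$-functional, so $\|f\|_\hva\le\|f\|_\hvf$, and Lemma \ref{at-ch} then gives $\|f\|_\hv\ls\|f\|_\hvf$. The whole content of the lemma is therefore the reverse bound on $\hvf$, and for this I would follow the by-now-standard finite-atom decomposition method of Meda, Sj\"ogren and Vallarino, adapted to the weighted/Musielak--Orlicz setting as in Ky \cite{ky}.

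Fix $f\in\hvf$ and normalise $\|f\|_\hv=1$. By definition $f$ is supported in a fixed ball $B_0:=B(x_0,r_0)$, satisfies $\int_{\rn}f(x)x^\az\,dx=0$ for all multi-indices $\az$ with $|\az|\le s$, and belongs to $L^q(\rn)$ with $q\in(1,\fz)$ (since $q>q(\vz)\ge1$); in particular $f\in L_\loc^1(\rn)$, so its grand maximal function obeys $f^\ast\ls\cm(f)$ pointwise, $\cm$ denoting the Hardy--Littlewood maximal operator, whence $f^\ast\in L^q(\rn)$, and the vanishing moments of $f$ together with a Taylor-remainder estimate give the decay $f^\ast(x)\ls C(f)\,|x-x_0|^{-(n+1)}$ for $|x-x_0|\ge2r_0$. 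I would then invoke the Calder\'on--Zygmund/grand-maximal decomposition underlying the proof of Lemma \ref{at-ch}: writing $\Omega_k:=\{x\in\rn:\ f^\ast(x)>2^k\}$, one has $f=\sum_{k\in\zz}\sum_i b_i^k$ in $\cs'(\rn)$, where each $b_i^k$ is a fixed multiple of a $(\vz,q,s)$-atom supported in a ball $B_i^k$ contained in a fixed dilate of $\Omega_k$, the $\{B_i^k\}_i$ have bounded overlap for each fixed $k$, and, crucially, $\sum_{k\in\zz}\sum_i\vz(B_i^k,\|b_i^k\|_{L_\vz^q(B_i^k)})\ls\|f\|_\hv\sim1$.

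The heart of the matter is to turn this into an honestly finite atomic decomposition of $f$ while keeping its $\blz_q$-functional $\ls\|f\|_\hv$ with a constant independent of $f$. The decay of $f^\ast$ off $2B_0$ gives a level $k_1$ (depending on $f$) with $\Omega_k\st2B_0$ for all $k>k_1$. Given $\eta\in(0,\fz)$, the convergence of the series above lets one pick a level $k_2<k_1$, and for each $k$ with $k_2<k\le k_1$ an index $M_k$, so that the ``tail'' $h$ --- comprising all $b_i^k$ with $k\le k_2$ and all $b_i^k$ with $k_2<k\le k_1$ and $i>M_k$ --- has $\sum\vz(B_i^k,\|b_i^k\|_{L_\vz^q(B_i^k)})<\eta$, the sum being over that sub-collection. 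The complementary sub-collection is finite and provides finitely many $(\vz,q,s)$-atom-multiples summing in $\cs'(\rn)$ to $f-h$. It remains to recognise $h$ as a single multiple $\mu\,a$ of a $(\vz,q,s)$-atom with $\vz(\wz B,\|h\|_{L_\vz^q(\wz B)})\ls\eta$: indeed $h$ is supported in one ball $\wz B$ (a dilate of $\Omega_{k_2}$), has vanishing moments up to order $s$, and a ``gluing'' estimate --- using the uniform lower and upper types and the $\aa_\fz(\rn)$ membership of $\vz$, and the uniformly locally dominated convergence condition, which licenses passing the limit of the partial sums of $h$ inside $\int_{\wz B}\vz(x,\cdot)\,dx$ --- bounds $\vz(\wz B,\|h\|_{L_\vz^q(\wz B)})$ by a constant times the total $\vz$-mass $<\eta$ of $h$'s constituents. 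Taking $\eta$ small then yields a finite atomic decomposition of $f$ with $\blz_q\ls\|f\|_\hv$, as claimed.

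The step I expect to be the main obstacle is exactly this reorganisation: at each level $k$ the Whitney-type balls $B_i^k$ may be infinitely many, so honest finiteness has to be arranged by a regrouping that does not inflate $\blz_q$, and the ``gluing'' estimate for $h$ --- converting ``small total $\vz$-mass of the constituents'' into ``small $\vz(\wz B,\|h\|_{L_\vz^q(\wz B)})$'' --- must be carried out with care; it is precisely here that the hypotheses $q<\fz$ and that $\vz$ satisfies the uniformly locally dominated convergence condition enter, and, as far as I can tell, only here. Everything else is a routine transcription of the classical and weighted arguments once the properties of $\vz$ collected in Section \ref{s2} and the atomic machinery of Lemmas \ref{atom} and \ref{at-ch} are available.
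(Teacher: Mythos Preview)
The paper does not prove Lemma~\ref{at-fin} at all: it is simply quoted as \cite[Theorem~3.4]{ky}, so there is no ``paper's own proof'' to compare against. Your plan is precisely the Meda--Sj\"ogren--Vallarino scheme as adapted by Ky in \cite{ky}, and it is the right one; the easy direction and the identification of the gluing estimate for the tail $h$ as the place where $q<\fz$ and the uniformly locally dominated convergence condition enter are both correct.

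Two imprecisions worth flagging before you try to write it out. First, your split into head and tail forgets the levels $k>k_1$: knowing $\Omega_k\subset 2B_0$ for such $k$ does not make that part of the sum finite, and in Ky's argument it is exactly the high levels (large $k$) that get packaged, via convergence in $L^q_\vz$, into a single small $(\vz,q,s)$-atom supported in a fixed dilate of $B_0$; the low levels are handled separately (in fact the decomposition terminates at a finite negative level because the ``good'' parts $g_k\to0$ as $k\to-\fz$). Second, the support of the tail $h$ is a fixed dilate of $B_0$, not ``a dilate of $\Omega_{k_2}$'' --- $\Omega_{k_2}$ need not be a ball and can be much larger than $B_0$. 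Neither point changes the strategy, but both need to be stated correctly for the argument to close.
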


\begin{thm}\label{dual}
Let $\varphi$ be a growth function
satisfying uniformly locally dominated convergence condition
and $s\in[m(\vz),\fz)\cap\zz_+$.
Then the dual space of $\hv$, denoted by $(\hv)^\ast$,
is $\lys$ in the following sense:

{\rm (i)} Suppose that $b\in \lys$. Then the linear functional
$L_b:\ f\to L_b(f):=
\int_\rn f(x)b(x)\,dx$, initially defined for all
$f\in H^{\vz,q,s}_{\fin}(\rn)$
with some $q\in(q(\vz),\fz)$,
has a bounded extension to $\hv$.

{\rm (ii)} Conversely,
every continuous linear functional on $\hv$ arises as in {\rm(i)}
with a unique $b\in\lys$.

Moreover, $\|b\|_{\lys}\sim\|L_b\|_{(\hv)^\ast}$,
where the implicit constants are independent of $b$.
\end{thm}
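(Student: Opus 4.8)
The plan is to establish the two inclusions separately, following the classical Fefferman--Stein duality argument adapted to the Musielak--Orlicz setting, with the finite atomic decomposition (Lemma \ref{at-fin}) as the key technical device for part (i).

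For part (i), given $b\in\lys$, I would first show that $L_b$ is well defined and bounded on $\hvf$ equipped with the $\hv$-quasi-norm. Let $f=\sum_{j=1}^k\lambda_j a_j$ be a finite combination of $(\vz,q,s)$-atoms supported in balls $B_j$, where $q\in(q(\vz),\fz)$ is chosen so that $q'\in(1,q(\vz)')$, which is legitimate by Theorem \ref{char} since then $\|b\|_{\lqps}\ls\|b\|_\lys$. By the vanishing moments of $a_j$ and Lemma \ref{PBg}, for each $j$ one has $\int_\rn a_j(x)b(x)\,dx=\int_{B_j}a_j(x)[b(x)-P_{B_j}^sb(x)]\,dx$; applying H\"older's inequality with exponents $q$ and $q'$ against the weight $\vz(x,\|\chi_{B_j}\|_\lv^{-1})$, together with condition (ii) in the definition of an atom and the characterization (ii) of Theorem \ref{char}, yields
\begin{eqnarray*}
\lf|\int_\rn a_j(x)b(x)\,dx\r|\ls\|b\|_\lys.
\end{eqnarray*}
Summing over $j$ and invoking Lemma \ref{atom} to control $\sum_j|\lambda_j|$ by $\blz_q(\{\lambda_j a_j\})\sim\|f\|_\hvf\sim\|f\|_\hv$, I obtain $|L_b(f)|\ls\|b\|_\lys\|f\|_\hv$. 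Since $\hvf$ is dense in $\hv$ and, by Lemma \ref{at-fin}, the two quasi-norms agree on it, $L_b$ extends boundedly to $\hv$ with $\|L_b\|_{(\hv)^\ast}\ls\|b\|_\lys$.

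For part (ii), let $L\in(\hv)^\ast$. For each fixed ball $B$, the space $L_{\vz,s}^q(B)$ of functions in $L_\vz^q(B)$ with vanishing moments up to order $s$ embeds continuously into $\hv$ (every such function is, after normalization, a constant multiple of a $(\vz,q,s)$-atom), so $L$ restricts to a bounded linear functional on this subspace of $L_\vz^q(B)$; by the Riesz representation theorem on $L_\vz^q(B)$ (whose dual is $L_{\vz'}^{q'}(B)$-type with the appropriate weight) there is $b_B\in L^{q'}$ on $B$ representing $L$ there. A standard consistency argument over an exhausting sequence of balls produces a single locally integrable $b$, defined modulo $\cp_s(\rn)$, with $L(f)=\int_\rn f(x)b(x)\,dx$ for all $f\in\hvf$. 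To estimate $\|b\|_\lys$, fix a ball $B$ and, using duality on $L_\vz^{q}(B)$ again, choose $g$ supported in $B$ with $\|g\|_{L_\vz^q(B)}\le\|\chi_B\|_\lv^{-1}$ realizing (up to $\ez$) the norm
\begin{eqnarray*}
\frac1{\|\chi_B\|_\lv}\int_B|b(x)-P_B^sb(x)|\,dx;
\end{eqnarray*}
replacing $g$ by $g-P_B^s g$ (which does not increase the relevant quantity, by Lemma \ref{PBg} and the $\aa_{q'}(\rn)$ bound used in the proof of Theorem \ref{char}) makes it a multiple of a $(\vz,q,s)$-atom, so that the pairing is bounded by $\|L\|_{(\hv)^\ast}$. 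Taking the supremum over $B$ and using Theorem \ref{char} to pass from the $L^q$-average back to $\|b\|_\lys$ gives $\|b\|_\lys\ls\|L\|_{(\hv)^\ast}$. Uniqueness of $b$ modulo $\cp_s(\rn)$ follows because a representing function that pairs to zero against every atom must coincide with its own $P_B^s$ on every ball.

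The main obstacle I anticipate is part (i): showing that $L_b$ is genuinely bounded on all of $\hv$ rather than merely finitely bounded. The subtlety is the same one that forces the uniformly locally dominated convergence hypothesis in Lemma \ref{at-fin} --- an infinite atomic decomposition $f=\sum_j b_j$ need not converge to $f$ in a topology strong enough to justify $L_b(f)=\sum_j L_b(b_j)$ directly, so one must instead obtain the bound on $\hvf$ first and then extend by density, which is exactly why Lemma \ref{at-fin} is invoked. A secondary point requiring care is the choice of the atomic exponent $q$: one needs $q\in(q(\vz),\fz)$ for the atomic theory (Lemmas \ref{atom}--\ref{at-fin}) while simultaneously needing $q'<q(\vz)'$, i.e. $q>q(\vz)$, so both constraints are compatible, and Theorem \ref{char} guarantees that the resulting Campanato-type norms of $b$ at exponent $q'$ are all equivalent to $\|b\|_\lys$. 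The rest is bookkeeping with H\"older's inequality, the moment conditions, and the weight estimates already assembled in Section \ref{s2}.
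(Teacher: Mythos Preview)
Your proposal is correct and follows essentially the same route as the paper: part (i) via H\"older against a single atom combined with Theorem \ref{char}, Lemma \ref{atom}, and the finite-atomic density Lemma \ref{at-fin}; part (ii) via representation of $L$ on each $L_{\vz,s}^q(B)$, a consistency/exhaustion argument over nested balls, and a test-function estimate to recover $\|b\|_\lys$. The only cosmetic differences are that the paper obtains the local representing function via Hahn--Banach plus the Lebesgue--Nikodym theorem (yielding $h\in L^1(B)$, which is all that is needed) rather than invoking a Riesz-type dual of $L_\vz^q(B)$ as you suggest, and for the norm bound the paper writes down the explicit test function $f=\mathrm{sign}(b-P_B^s b)$ and the atom $a=\tfrac12\|\chi_B\|_\lv^{-1}(f-P_B^s f)\chi_B$ rather than appealing to abstract duality.
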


\begin{proof}
By Theorem \ref{char} and Lemma \ref{at-ch},
to prove $\lys\st(\hv)^*$,
it is sufficient to show $\lqps\st(\hva)^*$.
Let $g\in\lqps$, $a$ be a $(\vz,q,s)$-atom
associated with a ball $B\st\rn$.
Then by the moment and size conditions of the atom $a$,
together with the H\"older's inequality,
we see that
\begin{eqnarray*}
\lf|\int_{\rn}a(x)g(x)\,dx\r|
&&=\lf|\int_{\rn}a(x)[g(x)-P_B^sg(x)]\,dx\r|\noz\\
&&\le\|a\|_\lvq
\lf\{\int_{\rn}\lf[\frac{\lf|{g(x)-P_B^sg(x)}\r|}{\vz(x,\vb)}\r]^{q'}
\vz\lf(x,\vb\r)\,dx\r\}^{1/q'}\noz\\
&&\le\frac1{\|\chi_B\|_\lv}
\lf\{\int_{\rn}\lf[\frac{\lf|{g(x)-P_B^sg(x)}\r|}{\vz(x,\vb)}\r]^{q'}
\vz\lf(x,\vb\r)\,dx\r\}^{1/q'}\noz\\
&&=\|g\|_\lqps.\noz
\end{eqnarray*}
Thus, by Lemma \ref{atom},
for a sequence $\{b_j\}_{j\in\nn}$ of multiples of
$(\vz, q,s)$-atoms associated with balls $\{B_j\}_{j\in\nn}$ and
$f=\sum_{j=1}^m b_j\in\hva$,
we have
\begin{eqnarray*}
\lf|\int_{\rn}f(x)g(x)\,dx\r|
&&\le\sum_{k=1}^m\|b_j\|_{L_\vz^q(B_j)}\|\chi_{B_j}\|_\lv\|g\|_\lqps\\
&&\ls \blz_q(\{b_j\}_{j=1}^m)\|g\|_\lqps,
\end{eqnarray*}
which, together with Lemma \ref{at-fin} and the fact that
$H^{\vz,q,s}_{\fin}(\rn)$ is dense in $H^{\vz,q,s}_{\rm at}(\rn)$,
completes the proof of (i).

Conversely, suppose $L\in(\hv)^*=(\hva)^*$,
where $(\vz,q,s)$ is admissible.
For a ball $B$ in $\rn$ and $q\in(q(\vz),\fz]$,
let
$$L_{\vz,s}^q(B)
:=\lf\{f\in\lvq:\ \int_\rn f(x)x^\az\,dx=0\
\mbox{ for all } \az\in\zz_+^n \mbox{ and }|\az|\le s\r\}.$$
Then, $\lvb\st\hv$ and, for all $f\in\lvb$,
$a:=\|\chi_B\|_\lv^{-1}\|f\|_\lvq^{-1}f$ is a $(\vz,q,s)$-atom
and hence $\|f\|_\hva\le\|\chi_B\|_\lv\|f\|_\lvq$.
Thus, for all $L\in(\hva)^*$ and $f\in\lvb$,
$$|Lf|\le\|L\|\|f\|_\hva.$$
Therefore, $L$ is a bounded linear functional on $\lvb$
which,  by the Hahn-Banach theorem, can be extended to the whole
space $\lvq$ without increasing its norm.
By this, together with the Lebesgue-Nikodym theorem,
we conclude that there exists $h\in L^1(B)$ such that, for all $f\in\lvb$,
$$L(f)=\int_\rn f(x)h(x)\,dx.$$

We now take a sequence of balls $\{B_j\}_{j\in\nn}$
such that $B_1\st B_2\st\cdots\st B_j\st\cdots$
and $\cup_{j=1}^\fz B_j=\rn$.
Then, by the above argument,
we see that there exists a sequence of $\{h_j\}_{j\in\nn}$
such that, for all $j\in\nn$,
$h_j\in L^1(B_j)$ and, for all $f\in L_{\vz,s}^q(B_j)$,
\begin{equation}\label{3.x1}
L(f)=\int_\rn f(x)h_j(x)\,dx.
\end{equation}
Thus, for all $f\in L_{\vz,s}^q(B_1)$,
$$\int_{B_1}f(x)[h_1(x)-h_2(x)]\,dx=0,$$
which, together with the fact that $g-P_{B_1}^sg\in L_{\vz,s}^q(B_1)$
for all $g\in L_\vz^q(B_1)$,
further implies that, for all $g\in L_\vz^q(B_1)$,
$$\int_{B_1}[g(x)-P_{B_1}^sg(x)][h_1(x)-h_2(x)]\,dx=0.$$
By
\begin{eqnarray*}
&&\int_BP_B^sg(x)f(x)-P_B^sf(x)g(x)\,dx\\
&&\hs=\int_BP_B^sg(x)[f(x)-P_B^sf(x)]+P_B^sf(x)[P_B^sg(x)-g(x)]\,dx=0,
\end{eqnarray*}
we have
$$\int_{B_1}P_{B_1}^sg(x)[h_1(x)-h_2(x)]\,dx
=\int_{B_1}g(x)P_{B_1}^s(h_1-h_2)(x)\,dx.
$$
Thus, for all $g\in L_\vz^q(B_1)$,
$$\int_{B_1}g(x)[h_1(x)-h_2(x)-P_{B_1}^s(h_1-h_2)(x)]\,dx=0,$$
which implies that, for almost every $x\in B_1$,
$(h_1-h_2)(x)=P_{B_1}^s(h_1-h_2)(x)$.

Let $\wz h_1:=h_1$ and, for $j\in\nn$,
$\wz h_{j+1}:=h_{j+1}+P_{B_j}(\wz h_j-h_{j+1})$.
Then we have a new sequence $\{\wz h_j\}_{j\in\nn}$ satisfying that,
for almost every $x\in B_j$, $\wz h_{j+1}(x)=\wz h_j(x)$ and
$\wz h_j\in L^1(B_j)$.
Let $b$ be a measurable function satisfying that, if $x\in B_j$,
$b(x)=\wz h_j(x)$.
It remains to prove that $b\in\lys$ and, for all $f\in\hvf$,
$$L(f)=\int_\rn f(x)b(x)\,dx.$$

For any $f\in\hvf$, it is easy to see that there exists $j\in\nn$ such that $\supp f\st B_j$.
Thus, $f\in L_{\vz,s}^q(B_j)$ and, by \eqref{3.x1}, we further see that
$$L(f)=\int_\rn f(x)b(x)\,dx.$$

For any ball $B\st\rn$,
let $f:={\rm sign}(b-P_B^sb)$ and
$a:=\frac12\|\chi_B\|_\lv^{-1}(f-P_B^sf)\chi_B$.
Then $a$ is a $(\vz,q,s)$-atom and
\begin{eqnarray*}
\frac1{\|\chi_B\|_\lv}
\int_B\lf|b(x)-P_B^sb(x)\r|\,dx
&&=\frac1{\|\chi_B\|_\lv}
\lf|\int_B[b(x)-P_B^sb(x)]f(x)\,dx\r|\\
&&=\frac1{\|\chi_B\|_\lv}
\lf|\int_Bb(x)[f(x)-P_B^sf(x)]\,dx\r|\ls|L(a)|\\
&&\ls\|L\|_{(\hva)^*}\|a\|_\hv
\ls\|L\|_{(\hv)^*}.
\end{eqnarray*}
Thus, $b\in\lys$ and $\|b\|_\lys\ls\|L\|_{(\hv)^*}$,
which completes the proof of Theorem \ref{dual}.
\end{proof}

\begin{rem}\label{r-dual}
(i) When $\vz$ is as in Remark \ref{r-def}(i),
Theorem \ref{dual} was proved by Taibleson and Weiss \cite{tw80}.

(ii) When $\vz$ is as in Remark \ref{r-def}(ii),
Theorem \ref{dual} was obtained by Garc\'ia-Cuerva \cite{g79}.
\end{rem}

From Theorems \ref{char} and \ref{dual}, we immediately deduce the following
interesting conclusion.

\begin{cor}\label{c3.1}
Let $\vz$ be a growth function satisfying
uniformly locally dominated convergence condition.
Then, for all $q\in[1,q(\vz)')$ and $s\in [m(\vz),\fz)\cap\zz_+$,
$\lqs$ and $\cl_{\vz, 1, m(\vz)}(\rn)$ coincide with equivalent norms.
\end{cor}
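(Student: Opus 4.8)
The plan is to deduce Corollary \ref{c3.1} as a purely formal consequence of the two theorems just established, without any new calculation. The point is that both spaces in question have been identified, up to equivalent norms, as the dual of the \emph{same} Banach space $\hv$.

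First I would fix a growth function $\vz$ satisfying the uniformly locally dominated convergence condition, and fix $q\in[1,q(\vz)')$ and $s\in[m(\vz),\fz)\cap\zz_+$. Applying Theorem \ref{dual} with the exponent $s$ gives that $(\hv)^\ast=\lys$ with $\|b\|_{\lys}\sim\|L_b\|_{(\hv)^\ast}$. Since $m(\vz)\in[m(\vz),\fz)\cap\zz_+$ trivially, Theorem \ref{dual} applied instead with the exponent $m(\vz)$ in place of $s$ yields $(\hv)^\ast=\cl_{\vz,1,m(\vz)}(\rn)$ with $\|b\|_{\cl_{\vz,1,m(\vz)}(\rn)}\sim\|L_b\|_{(\hv)^\ast}$. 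Chaining these two norm equivalences through the common quantity $\|L_b\|_{(\hv)^\ast}$, we obtain that, for every locally integrable $b$ (identified with $b+\cp_s(\rn)$), $\|b\|_{\lys}\sim\|b\|_{\cl_{\vz,1,m(\vz)}(\rn)}$, so in particular $b\in\lys$ if and only if $b\in\cl_{\vz,1,m(\vz)}(\rn)$, and the two norms are equivalent. Finally, Theorem \ref{char} (valid precisely because $q\in[1,q(\vz)')$) gives $\lys=\lqs$ with equivalent norms. Combining the last two equivalences yields the claimed coincidence $\lqs=\cl_{\vz,1,m(\vz)}(\rn)$ with equivalent norms.

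There is essentially one bookkeeping point to be careful about, and it is the step I expect to require the most attention: to invoke Theorem \ref{dual} with exponent $m(\vz)$ one must make sure the hypotheses match, namely that $m(\vz)\in\zz_+$ (true by definition of $m(\vz):=\lfz n[q(\vz)/i(\vz)-1]\rfz$ together with $q(\vz)\ge i(\vz)$, which forces $m(\vz)\ge0$), and that the identification in Definition \ref{d-cam} of elements of these spaces with cosets modulo $\cp_s(\rn)$ is compatible across the two values $s$ and $m(\vz)$; since $m(\vz)\le s$, a polynomial of degree at most $m(\vz)$ has degree at most $s$, so the coarser identification $b+\cp_{m(\vz)}(\rn)$ refines consistently and the linear functional $L_b(f)=\int_\rn f(x)b(x)\,dx$ is well defined on $\hvf$ for any admissible $q$ in either case. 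Apart from this, the argument is a one-line transitivity of three norm equivalences and requires nothing beyond Theorems \ref{char} and \ref{dual}.
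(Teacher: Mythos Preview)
Your proposal is correct and follows exactly the route the paper indicates: the corollary is stated to follow ``immediately'' from Theorems \ref{char} and \ref{dual}, and your argument is precisely the transitivity of norm equivalences through $(\hv)^\ast$ combined with the $q$-independence from Theorem \ref{char}. Your added remarks on the compatibility of the polynomial quotients are more careful than the paper itself, but the underlying approach is identical.
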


%%% ------------------------------------------------------------------------
%%%%-------@_@-------
%%% ------------------------------------------------------------------------

\section{The $\vz$-Carleson Measure Characterization of $\lys$\label{s4}}

\hskip\parindent
In this section, we establish the $\vz$-Carleson
measure characterization of $\lys$.
We first introduce the following $\vz$-Carleson
measures.

\begin{defn}\label{d-car}
Let $\vz$ be a growth function. A measure $d\mu$ on $\rr^{n+1}_+$
is called a \emph{$\vz$-Carleson measure} if
$$\|d\mu\|_{\vz}
:=\sup_{B\subset\rn}\frac{1}
{\|\chi_B\|_{L^\vz(\rn)}}
\lf\{\int_{\widehat{B}}
\frac{t^n}{\vz(B(x,t),\vb)}
\,\lf|d\mu(x,t)\r|\r\}^{1/2}<\fz,
$$
where the supremum is taken over all balls $B:=B(x_0,r)\subset\rn$ and
$$\widehat{B}:=\{(x,t)\in\rr^{n+1}_+:\ |x-x_0|+t<r\}$$
denotes the \emph{tent} over $B$.
\end{defn}

To obtain the $\vz$-Carleson measure characterization of $\lys$,
we need to recall the Musielak-Orlicz tent space
introduced in \cite{hyy}.
Let $\rr^{n+1}_+:=\rn\times(0,\fz)$. For any $x\in\rn$, let
$$\bgz(x):=\{(y,t)\in\rr^{n+1}_+:\ |x-y|<t\}
$$
be the \emph{cone of aperture $1$ with vertex $x\in\rn$}.

For all measurable functions $g$ on $\rr^{n+1}_+$ and $x\in\rn$, define
$$\ca(g)(x):=\lf\{\int_{\bgz(x)}|g(y,t)|^2
\frac{dy\,dt}{t^{n+1}}\r\}^{1/2}.
$$
Recall that a
measurable function $g$ is said
to belong to the \emph{tent space} $T^p_2(\rr^{n+1}_+)$ with $p\in(0,\fz)$,
if $\|g\|_{T^p_2(\rr^{n+1}_+)}:=\|\ca(g)\|_{L^p(\rn)}<\fz$.

Let $\vz$ be as in Definition \ref{d-vz}. In what follows, we denote by
$T_{\vz}(\rr^{n+1}_+)$ the \emph{space} of all measurable functions
$g$ on $\rr^{n+1}_+$ such that $\ca(g)\in L^{\vz}(\rn)$ and, for any
$g\in T_{\vz}(\rr^{n+1}_+)$, define its \emph{quasi-norm} by
$$\|g\|_{T_{\vz}(\rr^{n+1}_+)}:=\|\ca(g)\|_{L^{\vz}(\rn)}=
\inf\lf\{\lz\in(0,\fz):\
\int_{\rn}\vz\lf(x,\frac{\ca(g)(x)}{\lz}\r)\,dx\le1\r\}.$$

Let $p\in(1,\fz)$. A function $a$ on $\rr^{n+1}_+$ is called a
\emph{$(\vz,\,p)$-atom} if

(i) there exists a ball $B\subset\rn$ such that $\supp
a\subset\widehat{B}$;

(ii) $\|a\|_{T^p_2(\rr^{n+1}_+)}\le
|B|^{1/p}\|\chi_B\|_{L^\vz(\rn)}^{-1}$.

Furthermore, if $a$ is a $(\vz,p)$-atom for all $p\in (1,\fz)$,
we then call $a$ a \emph{$(\vz,\fz)$-atom}.

On the space $\lys$, we have the following $\vz$-Carleson measure characterization.

\begin{thm}\label{carl}
Let $\vz$ be a growth function, $s\in[m(\vz),\fz)\cap\zz_+$,
where $q(\vz)$ and $i(\vz)$ are respectively as in \eqref{qvz} and \eqref{ivz},
$\vz\in\aa_1(\rn)$,
$\phi\in\mathcal{S}(\mathbb R^n)$ be a radial function,
$\supp\phi\subset\{x\in\mathbb R^n:\ |x|\le 1\},$
$\int_{\mathbb R^n}\phi(x) x^\gz \,dx=0$ for all $|\gz|\le s$
and, for all $\xi\in\rn\bh\{0\}$,
$$\int_0^\infty|\hat{\phi}(\xi t)|^2\frac{dt}{t}=1.$$
Then $b\in\lys$ if and only if
$b\in L_\loc^2(\rn)$ and,
for all $(x,t)\in\rr^{n+1}_+$,
$$d\mu(x,t):=|\phi_t*b(x)|^2\frac{dxdt}{t}$$
is a $\vz$-Carleson measure
on $\rr^{n+1}_+$. Moreover, there exists a
positive constant $C$, independent of $b$, such that
$\frac1C\|b\|_{\lys}\le \|d\mu\|_{\vz} \le C\|b\|_{\lys}$.
\end{thm}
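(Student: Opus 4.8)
The plan is to derive the norm equivalence from the duality $(\hv)^\ast=\lys$ (Theorem \ref{dual}) together with the Musielak--Orlicz tent-space machinery of \cite{hyy}. Since $\vz\in\aa_1(\rn)$ forces $q(\vz)=1$, Theorem \ref{char} lets me use the quadratic quantity $\sup_B\bv^{-1}\{\int_B[|b(x)-\ps b(x)|/\vz(x,\vb)]^2\vz(x,\vb)\,dx\}^{1/2}$ as the norm of $b$ in $\lys$; note also that $\vz(\cdot,\vb)$, being an $A_1$ weight with $\vz(B,\vb)=1$, is bounded below on $B$, so every $b\in\lys$ automatically lies in $L_\loc^2(\rn)$. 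I will repeatedly use two facts. First, since $\supp\phi\st\{|x|\le1\}$ and $\int_\rn\phi(x)x^\gz\,dx=0$ for $|\gz|\le s$, for every $(x,t)\in\widehat B$ one has $B(x,t)\st B$ and $\phi_t\ast b(x)=\phi_t\ast[(b-\ps b)\chi_B](x)$, so the compact support of $\phi$ annihilates all ``tail'' terms. Second, Fubini on cones gives, for any ball $B$ and any $G$ supported in $\widehat B$, the \emph{weighted area identity}
$$\int_B\vz(y,\vb)\,[\ca(G)(y)]^2\,dy=\int_{\widehat B}|G(x,t)|^2\,\frac{\vz(B(x,t),\vb)}{t^n}\dxt,$$
which inverts precisely the weight $t^n/\vz(B(x,t),\vb)$ appearing in Definition \ref{d-car}.

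\emph{Necessity ($b\in\lys\Rightarrow d\mu$ is a $\vz$-Carleson measure).} Fix a ball $B$, set $G(x,t):=\chi_{\widehat B}(x,t)\,\overline{\phi_t\ast b(x)}\,t^n/\vz(B(x,t),\vb)$ and $I:=\int_{\widehat B}\frac{t^n}{\vz(B(x,t),\vb)}|\phi_t\ast b(x)|^2\dxt$; here $I<\fz$ since $b\in L_\loc^2(\rn)$ and $t^n/\vz(B(x,t),\vb)\ls|B|$ on $\widehat B$ by the $A_1$ reverse-doubling property. As $\ca(G)$ is supported in $B$, the weighted area identity gives $\int_B\vz(\cdot,\vb)[\ca(G)]^2=I$, and splitting at $\ca(G)=I^{1/2}$ and invoking the uniform lower type $p_0$ and uniform upper type $1$ of $\vz$ with $\vz(B,\vb)=1$, a routine Orlicz-convexity computation yields $\|\ca(G)\|_\lv=\|G\|_{T_\vz(\rr^{n+1}_+)}\ls\bv\,I^{1/2}$. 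Hence $\pi_\phi(G)\in\hv$ with $\|\pi_\phi(G)\|_\hv\ls\bv\,I^{1/2}$, where $\pi_\phi(G):=\int_0^\fz\phi_t\ast G(\cdot,t)\dt$; since $\pi_\phi(G)$ is supported in $B$ and has vanishing moments up to order $s$, it is a fixed multiple of a $(\vz,2,s)$-atom, so $L_b(\pi_\phi(G))=\int_\rn\pi_\phi(G)(x)b(x)\,dx$, while $\phi$ radial and Fubini give $\int_\rn\pi_\phi(G)b=\int_{\widehat B}G\,\phi_t\ast b\dxt=I$. Combining, $I=L_b(\pi_\phi(G))\le\|L_b\|_{(\hv)^\ast}\|\pi_\phi(G)\|_\hv\ls\|b\|_\lys\,\bv\,I^{1/2}$, so $\bv^{-1}I^{1/2}\ls\|b\|_\lys$; taking the supremum over $B$ gives $\|d\mu\|_{\vz}\ls\|b\|_\lys$.

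\emph{Sufficiency ($b\in L_\loc^2(\rn)$ and $d\mu$ a $\vz$-Carleson measure $\Rightarrow b\in\lys$).} By Theorem \ref{dual} it is enough to show $|\int_\rn f(x)b(x)\,dx|\ls\|d\mu\|_{\vz}\|f\|_\hv$ for all $f\in\hvf$. The Calder\'on reproducing formula $f=\int_0^\fz\phi_t\ast\phi_t\ast f\dt$ (legitimate on finite atomic combinations, the convergence as $t\to\fz$ coming from the moment conditions of the atoms) together with $\phi$ even give $\int_\rn fb=\int_{\rr^{n+1}_+}\phi_t\ast f(x)\,\overline{\phi_t\ast b(x)}\dxt$. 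By \cite{hyy}, $G:=\{(x,t)\mapsto\phi_t\ast f(x)\}\in T_\vz(\rr^{n+1}_+)$ with $\|G\|_{T_\vz(\rr^{n+1}_+)}\sim\|f\|_\hv$, so I decompose $G=\sum_j g_j$ with each $g_j$ a multiple of a $(\vz,\fz)$-tent atom supported in some $\widehat{B_j}$, the sizes being normalized as in the atomic decomposition of \cite{hyy}. For each $j$, Cauchy--Schwarz in $\dxt$ against $t^n/\vz(B(x,t),\|\chi_{B_j}\|_\lv^{-1})$ and its reciprocal, followed by the weighted area identity applied to the first factor, gives
$$\lf|\int_{\widehat{B_j}}g_j\,\overline{\phi_t\ast b}\dxt\r|\le\lf\{\int_{B_j}\vz\lf(y,\|\chi_{B_j}\|_\lv^{-1}\r)[\ca(g_j)(y)]^2\,dy\r\}^{1/2}\lf\{\int_{\widehat{B_j}}\frac{t^n}{\vz(B(x,t),\|\chi_{B_j}\|_\lv^{-1})}\,d\mu\r\}^{1/2},$$
whose second factor is $\le\|d\mu\|_{\vz}\|\chi_{B_j}\|_\lv$. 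For the first factor I use H\"older's inequality, the reverse H\"older inequality for the $A_\infty$ weight $\vz(\cdot,\|\chi_{B_j}\|_\lv^{-1})$ (recall $\vz(B_j,\|\chi_{B_j}\|_\lv^{-1})=1$), and the $(\vz,p)$-atom bound $\|\ca(g_j)\|_{L^p(\rn)}\ls(\text{size of }g_j)\,|B_j|^{1/p}\|\chi_{B_j}\|_\lv^{-1}$ for $p$ large, to see it is $\ls(\text{size of }g_j)\,\|\chi_{B_j}\|_\lv^{-1}$; hence each term is $\ls\|d\mu\|_{\vz}\,(\text{size of }g_j)$. Summing over $j$ by the lower-type convexity of $\vz$, exactly as in the proofs of Lemma \ref{atom} and of Theorem \ref{dual}(i), yields $|\int_\rn fb|\ls\|d\mu\|_{\vz}\|f\|_\hv$, i.e. $\|b\|_\lys=\|L_b\|_{(\hv)^\ast}\ls\|d\mu\|_{\vz}$.

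\emph{Where the difficulty lies.} The core obstacle --- the analogue, on the Carleson side, of the inseparability of the space and time variables met in Theorem \ref{J-N} --- is that the weight $t^n/\vz(B(x,t),\vb)$ in the $\vz$-Carleson norm genuinely depends on $t$ and is \emph{not} uniformly comparable to the $t$-free weight $1/\vz(x,\vb)$, so a direct appeal to a classical weighted square-function inequality is not available. The scheme above circumvents this by always coupling that weight with the Lusin-area functional via the weighted area identity, after which only Orlicz-convexity and $A_\infty$ reverse-H\"older estimates remain; importing the $T_\vz(\rr^{n+1}_+)$ atomic decomposition and the boundedness of $\pi_\phi$ from \cite{hyy} and combining them with Theorems \ref{char} and \ref{dual} is what carries the proof through.
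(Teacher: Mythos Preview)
Your sufficiency argument is essentially the paper's: Calder\'on's formula, the $T_\vz$-atomic decomposition from \cite{hyy}, Cauchy--Schwarz against the weight $t^n/\vz(B(x,t),\|\chi_{B_j}\|_\lv^{-1})$, and summation. Your explicit use of the weighted area identity plus reverse H\"older for the first Cauchy--Schwarz factor is in fact a cleaner justification of the step the paper records as $\{\int_{\widehat{B_j}}|a_j|^2\,\vz(B(x,t),\cdot)t^{-n}\,dxdt/t\}^{1/2}\ls|B_j|^{-1/2}\{\int_{\widehat{B_j}}|a_j|^2\,dxdt/t\}^{1/2}$.

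Your necessity argument, by contrast, is a genuinely different route. The paper splits $b=\psl b+(b-\psl b)\chi_{2B_0}+(b-\psl b)\chi_{(2B_0)^\complement}$, kills the polynomial via the moment condition, controls the local piece by the pointwise inequality $t^n/\vz(B(x,t),\vbl)\le\vz^{-1}(B(x,t),\vbl)/t^n$, Fubini, and the $L^2([\vz(\cdot,\vbl)]^{-1})$-boundedness of the Lusin square function (using $[\vz(\cdot,\vbl)]^{-1}\in A_2$), and treats the far piece by a Schwartz-decay tail estimate (which, as you implicitly observe, is in fact vacuous here since $\supp\phi\subset B(0,1)$ forces $\phi_t\ast b_3\equiv0$ on $\widehat{B_0}$). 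Your testing argument replaces the weighted square-function theory by the boundedness of $\pi_\phi:T_\vz\to\hv$ from \cite{hyy} together with Theorem~\ref{dual}; it is more structural and exploits the compact support of $\phi$ from the outset, while the paper's approach is more self-contained.

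There is, however, one genuine gap in your necessity. You assert that $\pi_\phi(G)$ is ``a fixed multiple of a $(\vz,2,s)$-atom'' and hence that $L_b(\pi_\phi(G))=\int_\rn\pi_\phi(G)\,b$; but membership of $\pi_\phi(G)$ in $L^2_\vz(B)$ is not clear from $G\in T_\vz$ alone, and the Fubini step behind $\int_\rn\pi_\phi(G)\,b=I$ likewise needs justification, since $G(x,t)=\chi_{\widehat B}\,\overline{\phi_t\ast b(x)}\,t^n/\vz(B(x,t),\vb)$ may blow up as $t\to0$ (the weight $t^n/\vz(B(x,t),\vb)$ is only bounded above by $C|B|$, not below). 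The standard fix is to truncate: set $G_\ez:=G\chi_{\{t>\ez\}}$, so that $G_\ez$ is bounded with support in a compact subset of $\widehat B$; then $\pi_\phi(G_\ez)\in L^\fz(B)$ is a bona fide multiple of a $(\vz,\fz,s)$-atom, your identities give $L_b(\pi_\phi(G_\ez))=I_\ez\le\|b\|_\lys\,\bv\,I_\ez^{1/2}$, and letting $\ez\downarrow0$ (monotone convergence, plus $G_\ez\to G$ in $T_\vz$) yields $I^{1/2}\ls\|b\|_\lys\,\bv$. With this adjustment the argument is complete.
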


\begin{proof}
Let $b\in\lys$ and $B_0:=B(x_0,r)\st \rn$.
Then,
\begin{eqnarray}\label{car4}
b=\psl b+(b-\psl b)\chi_{2B_0}+
(b-\psl b)\chi_{\rn\setminus2B_0}=:b_1+b_2+b_3.
\end{eqnarray}

For $b_1$, since $\int_\rn\phi(x)x^\gz\,dx=0$ for any $|\gz|\le s$,
we see that, for
all $t\in(0,\fz)$, it holds that $\phi_t\ast b_1\equiv0$ and hence
\begin{eqnarray}\label{car5}
\int_{\widehat{B_0}}|\phi_t*b_1(x)|^2
\frac{t^n}{\vz(B(x,t),\vbl)}\frac{dx\,dt}{t}=0.
\end{eqnarray}

For $b_2$,
by H\"older's inequality, for all balls $B\st\rn$ and $\tz\in(0,\fz)$,
we know that
\begin{eqnarray}\label{car1}
|B|=\int_B[\vz(x,\tz)]^{1/2}[\vz(x,\tz)]^{-1/2}\,dx
\le[\vz(B,\tz)]^{1/2}[\vz^{-1}(B,\tz)]^{1/2},
\end{eqnarray}
where above and in what follows, for any measurable set $E\st\rn$
and $\tz\in(0,\fz)$, we let
$\vz^{-1}(E,\tz):=\int_E[\vz(x,\tz)]^{-1}\,dx$.
From \eqref{car1}, it follows that
\begin{eqnarray}\label{car2}
&&\int_{\widehat{B_0}}|\phi_t*b_2(x)|^2
\frac{t^n}{\vz(B(x,t),\vbl)}\frac{dx\,dt}{t}\\
&&\hs\ls\int_{\widehat{B_0}}|\phi_t*b_2(x)|^2
\int_{B(x,t)}[\vz(y,\vbl)]^{-1}\,dy\dxtn\noz\\
&&\hs\ls\int_{B}[\vz(y,\vbl)]^{-1}\int_{\bgz(y)}|\phi_t*b_2(x)|^2
\dxtn\,dy\noz.
\end{eqnarray}
Since $\vz\in\aa_1(\rn)\st\aa_2(\rn)$,
it follows that $[\vz(\cdot,\vbl)]^{-1}\in A_2(\rn)$
(the class of Muckenhoupt weights).
By this, \eqref{car2}, Theorem \ref{char} and the boundedness
of the square function
$S_\phi f(y):=\int_{\bgz(y)}|\phi_t*b_2(x)|^2\dxtn$
on the weighted Lebesgue $L^2(\rn)$ space with the weight
$[\vz(\cdot,\vbl)]^{-1}$ (see, for example, \cite{gr85, st89, l11}),
we have
\begin{eqnarray}\label{car3}
\qquad&&\int_{\widehat{B_0}}|\phi_t*b_2(x)|^2
\frac{t^n}{\vz(B(x,t),\vbl)}\frac{dx\,dt}{t}\\
&&\hs\ls\int_{\rn}|b_2(y)|^2\lf[\vz\lf(y,\vbl\r)\r]^{-1}\,dy\noz\\
&&\hs\sim\int_{2B_0}|b(y)-\psl b(y)|^2\lf[\vz\lf(y,\vbl\r)\r]^{-1}\,dy\noz\\
&&\hs\ls\int_{2B_0}|b(y)-\psel b(y)|^2\lf[\vz\lf(y,\vbl\r)\r]^{-1}\,dy\noz\\
&&\hs\hs+\int_{2B_0}|\psel b(y)-\psl b(y)|^2\lf[\vz\lf(y,\vbl\r)\r]^{-1}\,dy\noz\\
&&\hs\ls\|\chi_{B_0}\|_\lv^2\|b\|_\lys^2,\noz
\end{eqnarray}
where the last inequality is deduced from $\vz\in\aa_1(\rn)$,
$\vz(2B_0,\vbl)\sim1$ and, for $y\in 2B_0$,
\begin{eqnarray*}
|\psel b(y)-\psl b(y)|
&&=|\psl(b-\psel b)(y)|\\
&&\ls \frac1{|B_0|}\int_{2B_0}|b(x)-\psel b(x)|\,dx
\ls\frac{\|\chi_{2B_0}\|_\lv}{|B_0|}\|b\|_\lys.
\end{eqnarray*}

Now, for $b_3$, let $B_k:=B(x_0, 2^kr)$.
By Lemma \ref{PBg}, Theorem \ref{char} and
$\phi\in\cs(\rn)$,
we conclude that, for all $x\in B_0$,
\begin{eqnarray*}
|\phi_t*b_3(x)|
&&\ls \int_{(\wz{B})^\complement}
\frac{t^\ez|b(y)-\pse b(y)|}{|y-x_B|^{n+\ez}}\,dy\ls\frac{t^\ez}{r^\ez}
\frac{\|\chi_{B_0}\|_{L^\vz(\rn)}}{|B_0|}
\|b\|_{\lys},
\end{eqnarray*}
which, together with \eqref{car1},
$\vz\in\aa_1(\rn)$ and $\vz(B_0,\vbl)=1$, implies that
\begin{eqnarray*}
&&\int_{\widehat{B_0}}|\phi_t*b_3(x)|^2
\frac{t^n}{\vz(B(x,t),\vbl)}\dxt\\
&&\hs\ls
\int_{\wh {B_0}}\frac{t^{2\ez}}{r^{2\ez}}\vz^{-1}\lf(B(x,t),\vbl\r)\dxtn
\frac{\|\chi_{B_0}\|_{L^\vz(\rn)}^2}{|B_0|^2}\|b\|_{\lys}^2\\
&&\hs\ls
\int_0^r\frac{t^{2\ez}}{r^{2\ez}}\dtn
\frac{\vz^{-1}(B_0,\vbl)}{|B_0|}\|\chi_{B_0}\|_{L^\vz(\rn)}^2\|b\|_{\lys}^2\\
&&\hs\ls\|\chi_{B_0}\|_{L^\vz(\rn)}^2\|b\|_{\lys}^2.
\end{eqnarray*}
From this, \eqref{car4}, \eqref{car5} and \eqref{car3}, we deduce that
$$\frac{1}{\|\chi_{B_0}\|_{L^\vz(\rn)}}
\lf\{\int_{\widehat{B_0}}|\phi_t*b(x)|^2
\frac{t^n}{\vz(B(x,t),\vbl)}\dxt\r\}^{1/2}\ls\|b\|_{\lys},$$
which, together with the arbitrariness of $B_0\subset\rn$, implies
that $d\mu(x,t):=|\phi_t*b(x)|^2\frac{dxdt}{t}$ for all $x\in\rn$ and $t\in (0,\fz)$
is a $\vz$-Carleson measure on $\rr^{n+1}_+$ and
$\|d\mu\|_{\vz}\ls\|b\|_{\lys}$.

Conversely, let $f\in H^{\vz,\,\fz,\,s}_{\fin}(\rn)$.
Then by $f\in L^\fz(\rn)$ with compact support, $b\in L^2_\loc(\rn)$
and the Plancherel formula, we conclude that
\begin{eqnarray*}
\int_{\rn}f(x)\overline{b(x)}\,dx=\int_{\rr^{n+1}_+}\phi_t\ast
f(x)\overline{\phi_t\ast b(x)}\,\frac{dx\,dt}{t},
\end{eqnarray*}
where $\overline{b(x)}$ and $\overline{\phi_t\ast b(x)}$ denote, respectively,
the conjugates of $b(x)$ and $\phi_t\ast b(x)$.
Moreover, from $f\in H^{\vz,\,\fz,\,s}_{\fin}(\rn)$
and \cite[Theorem 4.11]{hyy}, it follows that
$\phi_t\ast f\in T_{\vz}(\rr^{n+1}_+)$.
By this and \cite[Theorem 3.1]{hyy},
we know that there exist $\{\lz_j\}_{j\in\nn}\subset\cc$ and a sequence
$\{a_j\}_{j\in\nn}$ of $(\vz,\fz)$-atoms such that
$\phi_t\ast f=\sum_{j}\lz_ja_j$ almost everywhere
and $\sum_{j=1}^\fz|\lz_j|\ls\|f\|_\hv$.
From this, the Lebesgue dominated convergence theorem,
H\"older's inequality and $\vz\in\aa_1(\rn)$,
we deduce that
\begin{eqnarray*}
\lf|\int_\rn f(x)\overline{b(x)}\,dx\r|
&&\le\sum_{j=1}^\fz|\lz_j|\int_{\rr^{n+1}_+}|a_j(x,t)|
|\phi_t\ast b(x)|\,\frac{dx\,dt}{t}\\
&&\le\sum_{j}|\lz_j|\lf\{\int_{\widehat{B}_j}|a_j(x,t)|^2
\frac{\vz(B(x,t),\|\chi_{B_j}\|_\lv^{-1})}{t^n}
\frac{dx\,dt}{t}\r\}^{1/2}\\
&&\hs\times\lf\{\int_{\widehat{B}_j}|\phi_t\ast b(x)|^2
\frac{t^n}{\vz(B(x,t),\|\chi_{B_j}\|_\lv^{-1})}
\frac{dx\,dt}{t}\r\}^{1/2}\\
&&\ls\sum_{j}|\lz_j||B_j|^{-1/2}
\lf\{\int_{\widehat{B}_j}|a_j(x,t)|^2
\frac{dx\,dt}{t}\r\}^{1/2}
\|\chi_{B_j}\|_{L^\vz(\rn)}
\|d\mu\|_\vz\\
&&\ls\sum_{j=1}^\fz|\lz_j|\|d\mu\|_{\vz}
\ls\|f\|_{H^\vz(\rn)}\|d\mu\|_{\vz},
\end{eqnarray*}
which implies that $b\in\lys$ and $\|b\|_\lys\ls\|d\mu\|_{\vz}$.
This finishes the proof of Theorem \ref{carl}.
\end{proof}

\begin{rem}\label{r-carl}
(i) Fefferman and Stein \cite{fs72} shed some light on the tight
connection between BMO-function and Carleson measure,
which is the case of Theorem \ref{carl} when $s=0$ and
$\vz(x,t):=t$ for all $x\in\rn$ and $t\in(0,\fz)$.

(ii) When $s=0$, $\vz(x,t):=w(x)t$ and $w\in A_1(\rn)$,
Theorem \ref{carl} was obtained in \cite{hsv07}.

(iii) When $\vz$ is as in Remark \ref{r-def}(ii) with $p\in(0,1)$ and
$w\in A_1(\rn)$, Theorem \ref{carl} is new.
\end{rem}

%%% ------------------------------------------------------------------------
%%% -----------*(..)*----->_<---+_+--)_(-v_v---
%%% ------------------------------------------------------------------------

\bigskip

Yiyu Liang and Dachun Yang (Corresponding author)

\medskip

School of Mathematical Sciences, Beijing Normal University,
Laboratory of Mathematics and Complex Systems, Ministry of
Education, Beijing 100875, People's Republic of China

\smallskip

{\it E-mails}: \texttt{yyliang@mail.bnu.edu.cn} (Y. Liang)

\hspace{1.55cm}\texttt{dcyang@bnu.edu.cn} (D. Yang)

\end{document}